\newtheorem{theorem}{Theorem}[section]
\newtheorem{lemma}[theorem]{Lemma}
\theoremstyle{definition}
\newtheorem{example}[theorem]{Example}
\theoremstyle{remark}
\newtheorem{remark}[theorem]{Remark}
\DeclareMathOperator*{\argmin}{arg\,min}
\numberwithin{equation}{section}
\newcommand{\abs}[1]{\lvert#1\rvert}
\newcommand{\Dbar}{\stackinset{l}{0.1ex}{c}{}{\rule{0.33em}{0.3pt}}{D}}
\renewcommand{\L}{\mathcal{L}^2[a,b]}
\renewcommand{\H}{\mathcal{H}^1[a,b]}
\newcommand{\HHO}{\mathcal{H}^2_0[a,b]}
\newcommand{\HHH}{\mathcal{H}^3[a,b]}
\newcommand{\HHHO}{\mathcal{H}^3_0[a,b]}
\newcommand{\Lnorm}[1]{\abs{\abs{#1}}_{\mathcal{L}^2}}
\newcommand{\Hnorm}[1]{\abs{\abs{#1}}_{\mathcal{H}^1}}
\newcommand{\Lprod}[2]{(#1,#2)_{\mathcal{L}^2}}
\newcommand{\Hprod}[2]{(#1,#2)_{\mathcal{H}^1}}
\newcommand{\iab}{\int_a^b}
\newcommand{\lgrad}[1]{\nabla_{\mathcal{L}^2}^{#1}}
\newcommand{\hgrad}[1]{\nabla_{\mathcal{H}^1}^{#1}}
\newcommand{\CO}{\mathcal{C}^\infty_0[a,b]}
\begin{document}

\title{A new regularization approach for numerical differentiation}

\author{Abinash Nayak}
\address{ Department of Mathematics, University of Alabama at Birmingham, University Hall, 1402 10th Avenue South, Birmingham, AL 35294-1241}
\email{nash101@uab.edu}

\subjclass{Primary 65D25; Secondary 49M15, 65K05, 65K10, 45D05, 45Q05}
\date{\today}

\keywords{Numerical Differentiation, Newton-type methods, Mathematical programming methods, Optimization and variational techniques, Volterra integral equations, Inverse problems, Tikhonov regularization, iterative regularization, numerical analysis.}

\begin{abstract}
The problem of numerical differentiation can be thought of as an inverse problem by considering it as solving a Volterra equation. It is well known that such inverse integral problems are ill-posed and one requires regularization methods to approximate the solution appropriately.  The commonly practiced regularization methods are (external) parameter-based like Tikhonov regularization, which have certain inherent difficulties like choosing an optimal value of the regularization parameter, especially in the absence of noise information, which is a non-trivial task. Hence, the solution recovered is sometimes either over-fitted (for large noise level in the data) or over-smoothed (for discontinuous recovery). In such scenarios iterative regularization is an attractive alternative, where one minimizes an associated functional in a regularized fashion (i.e., stopping at an appropriate iteration). However, in most of the regularization methods the associated minimizing functional contains the noisy data directly, and hence, the recovery gets affected, especially in the presence of large noise level in the data. In this paper, we propose an iterative regularization method where the minimizing functional does not contain the noisy data directly, but rather a smoothed (integrated) version of it. The advantage, in addition to circumventing the use of noisy data directly, is that the sequence of functions (or curves) constructed during the descent process does not converge strongly to the noise data and hence, avoids overfitting. This property also aids in constructing an efficient (heuristic) stopping strategy in the absence of noise information, which is critical since for real life data one doesn't expect any knowledge on the error involved. Furthermore, this method is very robust to extreme noise level in the data as well as to errors with non-zero mean. To demonstrate the effectiveness of the new method we provide some examples comparing the numerical results obtained from our method with results obtained from some of the popular regularization methods such as Tikhonov regularization, total variation, smoothing spline and the polynomial regression method. 

\end{abstract}

\maketitle

\section{\textbf{Introduction}}
In many applications we want to calculate the derivative of a function measured experimentally, that is, to differentiate a function obtained from discrete noisy data. The problem consists of calculating stably the derivative of a smooth function $g$ given its noisy data $\tilde g$ such that $\abs{\abs{\tilde g - g}} \leq \delta$, where the norm $\abs{\abs{.}}$ can be either the $\mathcal{L}^\infty \text{ or } {\mathcal{L}^2}$-norm. This turns out to be an ill-posed problem, since for a small $\delta$ such that $\abs{\abs{\tilde g - g}} \leq \delta$ 
we can have $\abs{\abs{\tilde{g}' - g'}}$ arbitrarily large or even throw $\tilde g$ out of the set of differentiable functions.
Many methods and techniques have been introduced in the literature regarding this topic (see \cite{Knowles and Renka}, \cite{disconti1}, \cite{Wei and Hon}, \cite{Ramm and Smirnova}, \cite{Dinh and La and Lesnic}, \cite{F. and J.L. Jauberteau}, \cite{Jonathan J. Stickel}, \cite{Zhenyu and Meng and Guoqiang}, \cite{Zewen and Haibing and Shufang}, \cite{M.Diego}, \cite{D.Nho}, \cite{D.Hao+Reinhart+Seiffarth}, \cite{D.Hao+Reinhart+Seiffarth1} and references therein). They mostly fall into one of three categories: \textit{difference methods}, \textit{interpolation methods} and \textit{regularization methods}. One can use the first two methods to get satisfactory results provided that the function is given precisely, but they fail miserably when they encounter even small amounts of noise, if the step size is not chosen appropriately or in a regularized way. Hence in such scenarios regularization methods needs to be employed to handle the instability arising from noisy data, with Tikhonov's regularization 
method being very popular in this respect. In a regularization method the instability is bypassed by reducing the numerical differentiation problem to a family of well-posed problems, depending on a regularization (smoothing) parameter. Once an optimal value for this parameter is found, the corresponding well-posed problem is then solved to obtain an estimate for the derivative. Unfortunately, the computation of the optimal value for this parameter is a nontrivial task, and usually it demands some prior knowledge of the errors involved. Though there are many techniques developed to find an appropriate parameter value, such as the Morzov's discrepancy principle, the L-curve method and others, still sometimes the solution recovered is either over-fitted or over-smoothed; especially when $g$ has sharp edges, discontinuities or the data has extreme noise in it.
An attractive alternative is the iterative regularization methods, like the Landweber iterations. In an iterative regularization method (without any explicit dependence on external parameters) one can start the minimization process first and then use the iteration index as an regularization parameter, i.e., one terminates the minimization process at an appropriate time to achieve regularization. More recently iterative methods have been investigated in the frame work of regularization of nonlinear problems, see \cite{Bakushinskii, Hanke, Hankle, Kaltenbacher, Kaltenbacher+Neubauer+Scherzer}.

In this paper we propose a new smoothing or regularization technique that doesn't involve any external parameters, thus avoiding all the difficulties associated with them. Furthermore, since this method does not involve the noisy data directly, it is very robust to large noise level in the data as well as to errors with non-zero mean. Thus, it makes the method computationally feasible when dealing with real-life data, where we do not expect to have any knowledge of the errors involved or when there is extreme noise involved in the data. 

Let us briefly outline our method. First, we lay out a brief summary of a typical (external) parameter-dependent regularization method, and then we present our new (external) parameter-independent smoothing technique. For a given differentiable function $g$ the problem of numerical differentiation, finding $\varphi = g'$, can be expressed via a Volterra integral equation\footnote{the technicality as to how far $g$ can be weakened so that the solution of equation (\ref{TDeq}) is uniquely and stably recovered (finding the $\varphi$ inversely) will be discussed later.}:
\begin{equation} \label{TDeq}
    T_D \varphi = g_1,
\end{equation}
where $g_1(x) = g(x) - g(a)$ and 
\begin{equation} \label{TD}
     (T_D \varphi)(x) := \int_a^x \varphi(\xi) \; d\xi,
\end{equation}
for all $x \in [a,b]\subset \mathbb{R}$. Here one can attempt to solve equation (\ref{TDeq}) by approximating $\varphi$ with the minimizer of the functional 
\begin{align*}
    G_1(\psi) = \abs{\abs{T_D \psi - g_1}}_2^2.
\end{align*}
However, as stated earlier, the recovery becomes unstable for a noisy $g$. So, to counter the ill-posed nature of this problem regularization techniques are introduced: one instead minimizes the functional\footnote{again the domain space for the functionals $G_1$, $G_2$ and $G$ are discussed in details later.}
\begin{equation}\label{tikhonov functional}
G_2(\psi, \beta) = \abs{\abs{T_D\psi - g_1}}_2^2 + \beta \abs{\abs{D \psi}},
\end{equation}
where $\beta$ is a regularization parameter and $D$ is a differentiation operator of some order (see \cite{Hankle, Knowles and Renka}). This converts the ill-posed problem to a family of conditionally well-posed problems depending on $\beta$. The first term (fitting term) of the functional $G_2$ ensures that the inverse solution fits well with the given data, when applied by the forward operator $T_D$, and the second term (smoothing term) in \eqref{tikhonov functional} controls the smoothness of the inverse solution. Since the minimzer of the second term is not the solution of the inverse problem, unless it is a trivial solution, one needs to balance between the smoothing and fitting of the inverse recovery by finding an optimal value $\beta_0$. Then the exact solution $\varphi$ is approximated by minimizing the corresponding functional $G(.,\beta_0)$. 

Given that an important key in any regularization technique is the conversion of an ill-posed problem to a (conditionally) well-posed problem, we make this our main goal. We start with integrating the data twice, which helps in smoothing out the noise. Thus (\ref{TDeq}) can be reformulated as: find a $\varphi$ that satisfies
\begin{equation} \label{TDp}
    T_D \; \varphi = -u_1'',
\end{equation}
where {$-u_1'' = g_1$} and {$g_1' = g'= \varphi$}. Equivalently, {$u_1(x) = - \left( -\int_x^b \int_a^\eta \; g_1(\xi) \; d\xi d\eta \right)$} or
\begin{equation} \label{u1}
    {
    u_1(x) = \int_x^b \int_a^\eta g(\xi) \; d\xi \;d\eta\; - \; g(a)\left[ \frac{(b-a)^2}{2} - \frac{(x-a)^2}{2} \right].}
\end{equation}
Now we find the solution of (\ref{TDp}) by approximating it with the minimzer of the following functional: \begin{equation}\label{Gc1}
    G(\psi) = \abs{\abs{u_\psi' - u_1' }}_2^2,
\end{equation}
where $u_\psi$ is the solution, for a given $\psi$, of the following boundary value problem:
\begin{align}\label{uc}
-u_\psi'' &= T_D \psi \;,\\
u_\psi(a) = u_1(a),& \;\; u_\psi(b) = u_1(b). \label{ucbp}
\end{align}
Note that unlike the functionals $G_1$ and $G_2$ which are defined on the data $g$ directly, the functional $G$ is defined rather on the transformed data $u_1'$. Thus, one can see that when the data $g$ has noise in it then working with the smoothed (integrated) data $u_1$ is more effective than working with the noisy data $g$ directly. It will be proved in later sections that this simple change in the working space from $g$ to $u_1$ drastically improves the stability and smoothness of the inverse recovery of $\varphi$, without adding any further smoothing terms.
\begin{remark}
Typically, real-life data have zero-mean additive noise in it, i.e., 
\begin{equation}
    \tilde{g}(x) = g(x) + \epsilon(x),
\end{equation}{}
for $x \in [a, b]$, such that $\mathbb{E}_\mu(\epsilon) = 0$ and  $\mathbb{E}_\mu(\epsilon^2) \leq \delta$ (or equivalently, $\Lnorm{\tilde{g} - g} \leq \delta$), where $\mu$ is the probability density function of the random variable $\epsilon$. Therefore, integrating the data smooths out the noise present in the original data ($\tilde{g}$) and hence, the new data ($\tilde{u_1}$, as defined by \eqref{u1} for $\tilde{g}$) for the transformed equation \eqref{TDp} has a significantly reduced noise level in it, see the comparison in Figure \ref{gandgdelta vs uandudelta}.
\end{remark}{}

We prove, in Section \ref{Convexity}, that the functional $G$ is strictly convex and hence has a unique global minimum which satisfies \eqref{TDeq}, that is, $\varphi = \argmin_\psi \; G(\psi)$ satisfies $g' = \varphi$. The global minimum is achieved via an iterative method using an upgraded steepest gradient or conjugate-gradient method, presented in Section \ref{G-descent}, where the use of an (Sobolev) $\mathcal{H}^1$-gradient for $G$ instead of the commonly used $\mathcal{L}^2$-gradient is discussed and is a crucial step in the optimization process. In Section \ref{Convg, Stab, Error} we shed some light on the stability, convergence and well-posedness of this technique. In Section \ref{NI} the Volterra operator $T_D$ is improved for better computation and numerical efficiency but keeping it consistent with the theory developed. In Section \ref{R} we provide some numerical results and compare our method of numerical differentiation with some popular regularization methods, namely Tikhonov regularization, total variation, smoothing spline, mollification method and least square polynomial approximation (the comparison is done with results provided in \cite{Knowles and Renka}, \cite{Knowles and Wallace} and \cite{disconti1}).  Finally, in Section \ref{Stopping Criteria} we present an efficient (heuristic) stopping criterion for the descent process when we don't have any prior knowledge of the error norm.

\section{\textbf{Notations and Preliminaries}}
We adopt the following notations that will be used throughout the paper. All functions are real-valued defined on a bounded closed domain $[a,b] \subset \mathbb{R}$. For $1\leq p < \infty$, $\mathcal{L}^p[a,b]$ := $(\mathcal{L}^p, \abs{\abs{.}}_{\mathcal{L}^p},[a,b])$ denotes the usual Banach space of $p$-integrable functions on $[a,b]$ and the space $\mathcal{L}^\infty[a,b]:=$ $(\mathcal{L}^\infty,||.||_{\mathcal{L}^\infty}, [a,b])$ contains the essentially bounded measurable functions. Likewise the Sobolev space $\mathcal{H}^q[a,b] :=$ $(\mathcal{H}^q, ||.||_{\mathcal{H}^q},[a,b])$ contains all the functions for which $f$, $f'$, $\cdots,$ $f^{(q)} \; \in$ $\L$, with weak differentiation understood, and the space $\mathcal{H}^q_0[a,b] := \{ f \in \mathcal{H}^q : ``f \mbox{ vanishes at the boundary"} \}$. The spaces $\L$ and $\mathcal{H}^q$ are Hilbert spaces with inner-products denoted by $\Lprod{.}{.}$ and $(.,.)_{\mathcal{H}^q}$, respectively.  

\begin{remark}\label{Re1}
Note that although the Volterra operator $T_D$ is well-defined on the space of integrable functions ($\mathcal{L}^1[a,b]$), we will restrict it to the space of $\L$ functions, that is, we shall consider $\L$ to be the searching space for the solution of \eqref{TDeq}, since it is a Hilbert space and has a nice inner product $\Lprod{.}{.}$. Hence the domain of the functional $G$ is $\mathcal{D}_G =\L$. It is not hard to see that the Volterra operator $T_D$ is linear and bounded on $\L$.
\end{remark}

\begin{remark}\label{Re2}
From Remark \ref{Re1}, since $\varphi\in\L$ we have $T_D\varphi \in \L$. Thus our problem space to be considered is $\H$, that is, $g \in \H$. Note that since we will not be working with $g$ directly but rather with $u_1$, integrating twice makes $u_1 \in \HHH \subset \H$. This is particularly significant in the sense that we are able to upgrade the smoothness of the working data or information space from $\H$ to $\HHH$ and hence improve the stability and efficiency of numerical computations. This effect can be seen in some of the numerical examples presented in Section \ref{R} where the noise involved in the data is extreme. 
\end{remark}
Before we proceed to prove the convexity of $G$ and the well-posedness of the method, we will manipulate $u_1$ further to push it to an even nicer space. From equation (\ref{u1}), we have
\begin{align*}
    u_1(b) = 0 \mbox{ and }
    u_1(a) = \int_a^b \int_a^\eta g(\xi) \; d\xi\; d\eta \; - \; g(a) \frac{(b-a)^2}{2}
\end{align*}
Now redefining the working data as {$u(x) = u_1(x) - \frac{b-x}{b-a}u_1(a)$}, we have
\begin{align}\label{H20version}
{u(x)} &= { \int_x^b \int_a^\eta g(\xi) \; d\xi d\eta - g(a)\left[ \frac{(b-a)^2}{2} - \frac{(x-a)^2}{2} \right] }\\
    \notag & \hspace{1 cm}{ - \frac{b-x}{b-a}\left[ \int_a^b \int_a^\eta g(\xi)\; d\xi d\eta - g(a)\frac{(b-a)^2}{2} \right]\;, }
\end{align}
and this gives {$u(a) = 0$, $u(b) = 0$ and $-u'' = -u_1'' = g_1$}. Thus $u \in \HHHO$ and the negative Laplace operator $-\Delta \; = -\frac{\partial^2}{\partial x^2}$ is a positive operator in $\L$ on the domain $\mathcal{D}_\Delta = \HHO \supset \HHHO$, since for any $v \in \HHO$
\begin{align} \label{Lpos}
    \Lprod{-\Delta v}{v} &= \int_a^b -(\Delta v)v \; dx \\
    &= [-v\nabla v]_a^b + \int_a^b \abs{\nabla v}^2 \;dx \notag \\
    &= \Lprod{\nabla v}{\nabla v} \geq 0 \notag
\end{align}
and equality holds when $\nabla v = 0$, which implies $v \equiv 0$. Here, and until otherwise specified, the notations $\Delta$ and $\nabla$, will mean $\frac{\partial^2}{\partial x^2}$, $\frac{\partial}{\partial x}$, respectively, throughout the paper. {The functional $G(\psi)$ in \eqref{Gc1} is now redefined, with the new $u$, as
\begin{equation}\label{Gc}
    G(\psi) = || u_\psi' - u' ||_2.
\end{equation}{}
}
The positivity of the operator $-\Delta$ in $\L$ on the domain $\mathcal{D}_\Delta$ also helps us to obtain an upper and lower bound for $G(\psi)$, for any $\psi \in \L$. First we get the trivial upper bound
\begin{align}\label{Glub}
    G(\psi) = \Lnorm{u' - u_\psi'}^2 \; \leq \; \sum_{j=0}^1 \Lnorm{\frac{\partial^j}{\partial x^j}(u - u_\psi)}^2 = \Hnorm{u - u_\psi}^2 
\end{align}
To get a lower bound we note from (\ref{ucbp}) that $v = u - u_\psi \in \HHO$, thus
\begin{align} \label{primel2}
    G(\psi) &= \Lnorm{u' - u_\psi'}^2 \notag \\
    &\geq \lambda_1 \Lnorm{u - u_\psi}^2,
\end{align}
where $\lambda_1 > 0$ is the smallest eigenvalue of the positive operator $- \Delta$ on $\mathcal{D}_{\Delta}$. Hence, we get the bounds of $G(\psi)$ in terms of the $\H$ norm of $u - u_\psi$ as 
\begin{align}\label{Gbounds}
    \left( 1 + \frac{1}{\lambda_1} \right)^{-1} \Hnorm{u - u_\psi}^2 \leq G(\psi) = \Lnorm{u' - u_\psi'}^2 \leq \Hnorm{u - u_\psi}^2.
\end{align}
Equation \ref{Gbounds} indicates the stability of the method and the convergence of $u_{\psi_m}$ to $u$ in $\H$ if $G(\psi_m) \rightarrow 0$, as is explained in detail in Section \ref{Convg, Stab, Error}.

\section{\textbf{Convexity of the functional $G$}}\label{Convexity}
In this section we prove the convexity of the functional $G$, together with some important properties.
\begin{theorem} \label{Gprop} \mbox{ }
\begin{itemize}
    \item [(i)] An equivalent form of $G$, for any $\psi \in \L$, is :
    \begin{align}
        G(\psi) &= \Lnorm{ u' - u_\psi'}^2 \notag \\
        &= \int_a^b ({u'}^2 - {u_\psi '}^2) - 2(T_D \psi)(u - u_\psi) \; dx \;.
    \end{align}
    \item [(ii)] For any $\psi_1, \; \psi_2 \in \L$, we have
    \begin{equation}\label{Gc1-Gc2}
        G(\psi_1) - G(\psi_2) = \int_a^b -2(T_D(\psi_1 - \psi_2))(u - \frac{u_{\psi_1} + u_{\psi_2}}{2}) \; dx \;.
    \end{equation}
    \item [(iii)] The first G$\hat{a}$teaux derivative\footnote{it can be further proved that it's also the first Fr$\acute{e}$chet derivative of G at $\psi$.}, at $\psi \in \L$, for G is given by 
    \begin{equation}\label{G'}
        G'(\psi)[h] = \int_a^b (T_D \; h)(-2(u - u_\psi)) \; dx\;,
    \end{equation}
    where $h \in \L$. And the $\mathcal{L}^2$-gradient of $G$, at $\psi$, is given by
    \begin{equation}\label{Gl2grad}
        \nabla_{\mathcal{L}^2}^\psi G = T_D^*(-2(u - u_\psi)),    
    \end{equation}
    where the adjoint $T_D^*$ of $T_D$ is given by, for any $f \in \L$,\footnote{Hence $T_D^*$ is also a linear and bounded operator in $\L$.}
    \begin{equation}\label{TD*}
        (T_D^*\;f)(x) = \int_x^b f(\xi)\; d\xi \;; \hspace{0.5cm} \forall x \in [a,b].
    \end{equation}
    \item [(iv)] The second G$\hat{a}$teaux derivative \footnote{again, it can be proved that it's the second Fr$\acute{e}$chet derivative of G at $\psi$.}, at any $\psi \in \L$, of G is given by 
    \begin{equation}\label{G''}
        G''(\psi)[h,k] = 2\Lprod{-\Delta^{-1}(T_D\; h)}{(T_D \; k)}
    \end{equation}
    where $h, \; k \in \L$. Hence for any $\psi \in \L$, $G''(\psi)$ is a positive definite quadratic form. 
\end{itemize}
\end{theorem}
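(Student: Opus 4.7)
The four claims are essentially consequences of a single integration-by-parts computation together with the linearity of the solution map $\psi \mapsto u_\psi$, so I would handle them in the order listed, feeding each one into the next.

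For (i), the plan is to expand $G(\psi)=\int_a^b(u'-u_\psi')^2\,dx$ into $\int u'^2 - 2u'u_\psi' + u_\psi'^2\,dx$ and rewrite the cross and square terms by integration by parts. Since $u\in\HHHO$ and $u_\psi(a)=u(a)$, $u_\psi(b)=u(b)$ (in the redefined setting $u(a)=u(b)=0$, so boundary terms vanish) and $-u_\psi''=T_D\psi$, I get the two identities $\int u'u_\psi'\,dx = \int u\,(T_D\psi)\,dx$ and $\int u_\psi'^2\,dx = \int u_\psi\,(T_D\psi)\,dx$. Substituting these and then rewriting $\int u_\psi'^2\,dx$ back as $\int u'^2\,dx - (\int u'^2\,dx - \int u_\psi'^2\,dx)$ gives the claimed form. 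The only thing to check is that the boundary data used to kill the boundary terms is consistent with the definition of $u_\psi$ in \eqref{uc}--\eqref{ucbp}, which it is.

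For (ii), rather than subtracting the form in (i) directly, the cleanest path is to expand the asserted right-hand side and show it reduces to $G(\psi_1)-G(\psi_2)$. The key observation is the symmetry
\begin{equation*}
\int_a^b (T_D\psi_1)\,u_{\psi_2}\,dx = \int_a^b u_{\psi_1}'\,u_{\psi_2}'\,dx = \int_a^b (T_D\psi_2)\,u_{\psi_1}\,dx,
\end{equation*}
obtained by integrating by parts against $-u_{\psi_1}''=T_D\psi_1$ and using the common boundary values of $u_{\psi_1},u_{\psi_2}$. When the right-hand side of \eqref{Gc1-Gc2} is expanded, the two mixed terms $\int(T_D\psi_1)u_{\psi_2}$ and $\int(T_D\psi_2)u_{\psi_1}$ cancel, and what remains is exactly $G(\psi_1)-G(\psi_2)$ after applying (i). This symmetry is really a disguised self-adjointness of $-\Delta$, and I expect it to be the main computational step.

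For (iii) and (iv), the work is now minimal because the map $\psi\mapsto u_\psi$ is affine: for any fixed $\psi$ and any $h\in\L$, the function $u_{\psi+th}-u_\psi$ solves $-v''=t\,T_D h$ with zero boundary data, so $u_{\psi+th}=u_\psi+t\,w_h$ where $w_h=(-\Delta)^{-1}(T_D h)\in\HHO$. Plugging $\psi_1=\psi+th$, $\psi_2=\psi$ into (ii) and dividing by $t$ yields $G'(\psi)[h]=-2\int (T_D h)(u-u_\psi)\,dx$ directly, with no analytic subtlety. For the $\mathcal L^2$-gradient I simply apply Fubini to $\int_a^b(\int_a^x h(\xi)\,d\xi)f(x)\,dx$ to get $T_D^\ast f(\xi)=\int_\xi^b f(x)\,dx$, proving \eqref{Gl2grad} and \eqref{TD*}. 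For (iv) I differentiate $G'(\psi)[h]$ in the direction $k$: again by linearity, $u_{\psi+tk}-u_\psi = t\,(-\Delta)^{-1}(T_D k)$, so
\begin{equation*}
G''(\psi)[h,k] = 2\int_a^b (T_D h)\,(-\Delta)^{-1}(T_D k)\,dx = 2\,\Lprod{-\Delta^{-1}(T_D h)}{T_D k}.
\end{equation*}
Strict positive definiteness follows from \eqref{Lpos}: setting $v=(-\Delta)^{-1}(T_D h)\in\HHO$, we get $G''(\psi)[h,h]=2\Lprod{-\Delta v}{v}=2\|v'\|_{\mathcal L^2}^2\ge 0$, with equality forcing $v\equiv 0$, hence $T_D h=-v''\equiv 0$, hence $h\equiv 0$ in $\L$ (here the injectivity of $T_D$ on $\L$, which is immediate from differentiating $\int_a^x h=0$, is the only extra input). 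The main obstacle is really just keeping the boundary terms under control during the repeated integration by parts; everything else is bookkeeping driven by the affine structure of $\psi\mapsto u_\psi$.
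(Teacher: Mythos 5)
Your proposal is correct and follows essentially the same route as the paper: integration by parts using $u-u_\psi\in\HHO$ for (i)--(ii), the relation $-\Delta(u_{\psi+\epsilon h}-u_\psi)=\epsilon\,T_D h$ for (iii)--(iv), Fubini for the adjoint, and positivity of $-\Delta$ on $\HHO$ plus injectivity of $T_D$ for the definiteness claim. The only (harmless) organizational difference is that you derive (iii) from (ii) and make the affine structure $u_{\psi+th}=u_\psi+t(-\Delta)^{-1}(T_Dh)$ explicit, which renders the paper's Lemma~\ref{ueconvg} an immediate corollary rather than a separately proved estimate.
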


For the proof of Theorem \ref{Gprop} we also need the following ancillary result.
\begin{lemma}\label{ueconvg}
For fixed $\psi, \; h \in \L$ we have, in $\H$,
\begin{equation}
    \lim_{\epsilon \rightarrow 0} u_{\psi + \epsilon h} = u_\psi.
\end{equation}
\end{lemma}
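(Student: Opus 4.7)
The plan is to exploit the linearity of the boundary value problem \eqref{uc}--\eqref{ucbp} in the argument $\psi$. First I would set $v_\epsilon := u_{\psi+\epsilon h} - u_\psi$ and observe that, since the prescribed boundary values $u_1(a)$ and $u_1(b)$ do not depend on $\psi$, subtracting the two BVPs gives
\begin{equation*}
-v_\epsilon'' = T_D(\psi+\epsilon h) - T_D\psi = \epsilon\, T_D h, \qquad v_\epsilon(a)=v_\epsilon(b)=0,
\end{equation*}
where I have used the linearity of $T_D$ on $\mathcal{L}^2$ noted in Remark \ref{Re1}. In particular $v_\epsilon \in \mathcal{H}^2_0[a,b]$, and by the linearity of this homogeneous Dirichlet problem we can write $v_\epsilon = \epsilon\, w$, where $w \in \mathcal{H}^2_0[a,b]$ is the unique solution of $-w'' = T_D h$ with $w(a)=w(b)=0$.

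Next I would bound $\|w\|_{\mathcal{H}^1}$ by a constant depending only on $h$. Applying the positivity identity \eqref{Lpos} to $w$ together with Cauchy--Schwarz,
\begin{equation*}
\Lnorm{\nabla w}^2 = \Lprod{-\Delta w}{w} = \Lprod{T_D h}{w} \le \Lnorm{T_D h}\,\Lnorm{w},
\end{equation*}
and the Poincaré-type inequality $\Lnorm{w}^2 \le \lambda_1^{-1}\Lnorm{\nabla w}^2$ (where $\lambda_1>0$ is as in \eqref{primel2}) then gives $\Lnorm{\nabla w}\le \lambda_1^{-1/2}\Lnorm{T_D h}$ and hence
\begin{equation*}
\Hnorm{w}^2 = \Lnorm{w}^2 + \Lnorm{\nabla w}^2 \le \left(1+\tfrac{1}{\lambda_1}\right)\lambda_1^{-1}\Lnorm{T_D h}^2.
\end{equation*}
Since $T_D$ is bounded on $\mathcal{L}^2$, the right-hand side is finite and depends only on $h$ (and on $[a,b]$).

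Finally, combining the two steps,
\begin{equation*}
\Hnorm{u_{\psi+\epsilon h} - u_\psi} = |\epsilon|\, \Hnorm{w} \longrightarrow 0 \quad \text{as } \epsilon \to 0,
\end{equation*}
which is the claim. There is no real obstacle here: the lemma is essentially a continuity statement following from the fact that $\psi \mapsto u_\psi$ is an affine map whose linear part is continuous from $\mathcal{L}^2$ into $\mathcal{H}^1$. The only mild care needed is in observing that the nonhomogeneous boundary data cancel in the difference $u_{\psi+\epsilon h} - u_\psi$, reducing everything to a homogeneous Dirichlet problem where the estimates from Section \ref{Convexity} (in particular \eqref{Lpos}) apply directly.
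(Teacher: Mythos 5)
Your proposal is correct and follows essentially the same route as the paper: subtract the two boundary value problems, use the energy identity $\Lnorm{\nabla v}^2 = \Lprod{-\Delta v}{v}$ together with Cauchy--Schwarz and the Poincar\'e-type bound involving $\lambda_1$ to control the $\mathcal{H}^1$-norm of the difference by a constant times $\epsilon$. Factoring the difference explicitly as $\epsilon w$ is a minor cosmetic variation; the estimates are identical to those in the paper's proof.
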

\begin{proof}
Subtracting the following equations
\begin{align*}
    - u_\psi '' &= T_D\; \psi \\
    - u_{\psi + \epsilon h}'' &= T_D \; (\psi + \epsilon h)\;,
\end{align*}
we have 
\begin{equation}
    -\Delta (u_{\psi + \epsilon h} - u_\psi) = \epsilon \; T_D h \label{Linverse}
\end{equation}
and using $u_{\psi + \epsilon h} - u_\psi \in \HHO$ we get, via integration by parts,
\begin{align*}
 \Lprod{\nabla(u_{\psi + \epsilon h} - u_\psi)}{\nabla(u_{\psi + \epsilon h} - u_\psi)} = \epsilon \;\Lprod{T_D h}{u_{\psi + \epsilon h} - u_\psi}.
\end{align*}
Now from \eqref{primel2} we have ${\lambda_1} \Lnorm{u_{\psi+\epsilon h} - u_\psi}^2 \leq \Lnorm{\nabla(u_{\psi+\epsilon h} - u_\psi)}^2$, and using the Cauchy-Schwarz inequality, we have
\begin{align}\label{O(e)}
 &  (1 + \lambda_1^{-1})^{-1}\; \Hnorm{u_{\psi + \epsilon h} - u_\psi}^2 \leq \Lnorm{\nabla (u_{\psi + \epsilon h} - u_\psi)}^2 = \epsilon \;\Lprod{T_D h}{u_{\psi + \epsilon h} - u_\psi} \notag,\\
& \Longrightarrow	(1 + \lambda_1^{-1})^{-1}\; ||{u_{\psi + \epsilon h} - u_\psi}||_{\mathcal{H}^1}  \leq \; \epsilon \; \Lnorm{T_Dh}, \notag
\end{align}
where $\lambda_1 > 0$. Hence $u_{\psi + \epsilon h} \xrightarrow{ \epsilon \rightarrow 0} u_\psi$ in $\H$, since the operator $T_D$ is bounded and $\psi, \; h \in \L$ are fixed, which implies the right hand side is of $O(\epsilon)$. 
\end{proof}

\begin{subsection}{Proof of Theorem \ref{Gprop}}
The proof of first two properties (i) and (ii) are straight forward via integration by parts and using the fact that $u - u_\psi \in \HHO$. 
    In order to prove (iii) and (iv), we use Lemma \ref{ueconvg}.
    \begin{itemize}
    \item[(iii)] The G\^{a}teaux derivative of the functional $G$ at $\psi$ in the direction of $h \in \L$ is given by 
    \begin{equation}\label{Gpepsilon}
        G'(\psi)[h] = \lim_{\epsilon \rightarrow 0} \frac{G(\psi + \epsilon h) - G(\psi)}{\epsilon}.
    \end{equation}
    Now for a fixed $\epsilon > 0$, we have using \ref{Linverse},
\begin{align*}
   & \frac{G(\psi + \epsilon h) - G(\psi)}{\epsilon} \\&= \epsilon^{-1} \iab (u' - u_{\psi + \epsilon h}')^2 - (u' - u_\psi')^2 \; dx \\
    &= \epsilon^{-1} \iab (u_\psi' - u_{\psi + \epsilon h}')(2u' - (u_{\psi + \epsilon h} + u_\psi')) \; dx\\
    &= \epsilon^{-1} \iab -\Delta (u_\psi - u_{\psi + \epsilon h})(2u - (u_{\psi + \epsilon h} + u_\psi)) \; dx\\
    &= \epsilon^{-1} \iab - \epsilon \; (T_D h)(2u - (u_{\psi + \epsilon h} + u_\psi))\; dx\\
    &= -\Lprod{T_D h}{2u - (u_{\psi + \epsilon h} + u_\psi)}.
\end{align*}
Using Lemma \ref{ueconvg}, one obtains the G\^{a}teaux derivative of $G$ at $\psi \in \L$ in the direction of $h \in \L$ as
\begin{equation*}
    G'(\psi)[h] = \Lprod{T_D h}{-2(u - u_\psi)}\; .
\end{equation*}
Note that  $\Lprod{T_D h}{-2(u - u_\psi)} = \Lprod{h}{T_D^*(-2(u - u_\psi))}$ for all $h \in \L$, where $T_D^*$ is the adjoint of the operator $T_D$. Hence by Riesz representation theorem, the $\mathcal{L}^2$-gradient of the functional $G$ at $\psi$ is given by 
\begin{equation*}
    \lgrad{\psi}G = T_D^* (-2(u - u_\psi)).
\end{equation*}

\item[(iv)] Finally, the second G\^{a}teaux derivative for the functional $G$ at $\psi$ is given by 
    \begin{equation}\label{Gpp}
        G''(\psi)[h,k] = \lim_{\epsilon \rightarrow 0} \frac{G'(\psi + \epsilon h)[k] - G'(\psi)[k]}{\epsilon}
    \end{equation}
    Again for a fixed $\epsilon > 0$, we have using \eqref{Linverse}
\begin{align*}
   & \frac{G'(\psi + \epsilon h)[k] - G'(\psi)[k]}{\epsilon} \\&= \epsilon^{-1} \iab (T_D k)(-2(u - u_{\psi + \epsilon h})) - (T_D k)(-2(u - u_\psi)) dx \\
    &= \epsilon^{-1} \iab -2(T_D k)(u_\psi - u_{\psi + \epsilon h}) \; dx\\
    &= \epsilon^{-1} \iab -2(T_D k)(\epsilon \; \Delta^{-1}(T_D h))\; dx \\
    &= 2 \iab (T_D k)(- \Delta^{-1}(T_D h)) \; dx\\
    &= 2 \; \Lprod{-\Delta^{-1}(T_D h)}{T_D k} \; .
\end{align*}
    Hence from (\ref{Gpp}) and letting $\epsilon \rightarrow 0$ we get
    $$G''(\psi)[h,k] = 2 \; \Lprod{-\Delta^{-1}(T_D h)}{T_D k} \;.$$
    Here we can see the strict convexity of the functional $G$, as for any $h \in \L$, we have
    \begin{align*}
        G''(\psi)[h,h] &= 2\Lprod{-\Delta^{-1}(T_D\; h)}{(T_D \; h)} \\
        &= 2\Lprod{y}{-\Delta y},
    \end{align*}
    where $-\Delta y = T_D h$ and $y \in \HHO$ (from (\ref{Linverse})). As $-\Delta$ is a positive operator on $\HHO$, $y$ is the trivial solution if and only if $T_D h = 0$. But 
\begin{align*}
\int_a^x h(\xi) \; d\xi = 0 
\end{align*}
for all $x \in [a,b]$ if and only if $h \equiv 0$. 
    Thus $G''(\psi)$ is a positive definite form for any $\psi \in \L$. \qed
\end{itemize}

\end{subsection}
In this section we proved that the functional $G$ is strictly convex and hence has a unique minimizer, which is attained by the solution $\varphi$ of the inverse problem \eqref{TDeq}. We next discuss a descent algorithm that uses the $\mathcal{L}^2$-gradient to derive other gradients that provide descent directions, with better and faster descent rates.

\begin{section}{The Descent Algorithm}\label{G-descent}
Here we discuss the problem of minimizing the functional $G$ via a descent method. Theorem \ref{Gprop} suggests that the minimization of the functional $G$ should be computationally effective in that $\varphi$ is not only the unique global minimum for $G$ but also the unique zero for the gradient, that is, $\lgrad{\psi}G \neq 0$ for $\psi \neq \varphi$. Now for a given $\psi \in \L$, let $h \in \L$ denote an update direction for $\psi$. Then  Taylor's expansion gives
\begin{align*}
    G(\psi - \alpha h) = G(\psi) - \alpha G'(\psi)[h] + {O(\alpha^2)}
\end{align*}
or, for sufficiently small $\alpha >0$, we have 
\begin{align}\label{taylor}
G(\psi - \alpha h) - G(\psi) \approx -\alpha G'(\psi)[h].
\end{align}
So if we choose the direction $h$ in such a way that $G'(\psi)[h] > 0$, then we can minimize $G$ along this direction. Thus we can set up a recovery algorithm for $\varphi$, forming a sequence of values
\begin{align*}
G(\psi_{initial} - \alpha_{1}h_{1}) > \cdots > G(\psi_{m-1} - \alpha_{m}h_{m}) > G(\psi_m - \alpha_{m+1}h_{m+1}) \geq 0
\end{align*} 
We list a number of different gradient directions that can make $G'(\psi)[h] > 0$.

\begin{enumerate}
\item \textbf{The $\mathcal{L}^2$-Gradient:} \newline
First, notice from Theorem \ref{Gprop} that at a given $\psi \in \L$,
\begin{equation}\label{Gl2gradeq}
    G'(\psi)[h] = \Lprod{h}{\lgrad{\psi}G} 
\end{equation}
so if we choose the direction $h = \lgrad{\psi}G$ at $\psi$, then $G'(\psi)[h] > 0$. However, there are numerical issues associated with $\mathcal{L}^2$-gradient of $G$ during the descent process stemming from the fact that it is always zero at $b$. Consequently, the boundary data at $b$ for the evolving functions $\psi_m$ are invariant during the descent and there is no control on the evolving boundary data at $b$. This can result in severe decay near the boundary point $b$ if $\psi_{initial}(b) \neq \varphi(b)$, as the end point for all such $\psi_m$ is glued to $\psi_{initial}(b)$, but $\psi_m \rightarrow \varphi$ in $\L$, as is proved in section \ref{Convg, Stab, Error}.

\item \textbf{The $\mathcal{H}^1$-Gradient:} \newline
One can circumvent this problem by opting for the Sobolev gradient $\hgrad{\psi}G$ instead (see \cite{E}), which is also known as the Neuberger gradient. It is defined as follows: for any $h \in \H$ 
\begin{align} \label{Sobolev Gradient}
    G'(\psi)[h] &= \Hprod{\hgrad{\psi}G}{h} \hspace{1cm} \\
    &= \Lprod{g'}{h'} + \Lprod{g}{h} \notag \\
    &= -\Lprod{g''}{h} + \Lprod{g}{h} + [g'h]_a^b \notag \\
    &= \Lprod{-g'' + g}{h} + [g'h]_a^b \notag
\end{align}
where $g = \hgrad{\psi}G$. Comparing with (\ref{Gl2gradeq}) one can obtain the Neuberger gradient $g$ at $\psi$, by solving the boundary value problem
\begin{gather}
    -g'' + g = \lgrad{\psi}G \notag \\
    [g'h]_a^b = 0.\label{h1gradeq}
\end{gather}
Setting $h = g$, the boundary condition becomes
\begin{equation}
    [g'g]_a^b = g'(b)g(b) - g'(a)g(a) = 0.
\end{equation}
This provides us a gradient, $\hgrad{\psi}G$, with considerably more flexibility at the boundary points $a$ and $b$. In particular consider the following cases:
\begin{enumerate}
    \item Dirichlet Neuberger gradient : $g(a) = 0$ and $g(b) = 0 $.
    \item Neumann Neuberger gradient : $g'(a)=0$ and $g'(b) = 0$.
    \item Robin or mixed Neuberger gradient : $g(a) = 0$ and $g'(b)=0$ or $g'(a)=0$ and $g(b)=0$.
\end{enumerate}
This excellent smoothing technique was originally introduced and used by Neuberger. In addition to the flexibility at the end points, it enables the new gradient to be a preconditioned (smoothed) version of $\lgrad{\psi}G$, as $g = (I - \Delta)^{-1}\;\lgrad{\psi}G$, and hence gives a superior convergence in the steepest descent algorithms. So now choosing the descent direction $h = g$ at $\psi$ makes $G'(\psi)[h] = \Hprod{g}{\hgrad{\psi}G} > 0$ and hence $G(\psi - \alpha h) - G(\psi) < 0$ (from (\ref{taylor})). As stated earlier, the greatest advantage of this gradient is the control of boundary data during the descent process, since based on some prior information of the boundary data we can choose any one of the three aforementioned gradients. For example, if some prior knowledge on $\varphi(a)$ and $\varphi(b)$ are known, then one can define $\varphi_{initial}$ as the straight line joining them and use the Dirichlet Neubeger gradient for the descent. Thus the boundary data is preserved in each of the evolving $\psi_m$ during the descent process, which leads to a much more efficient, and faster, descent compared to the normal $\mathcal{L}^2$-gradient descent. Even when $\varphi|_{\{a,b\}}$ is unknown, one can use the Neumann Neuberger gradient that allows free movements at the boundary points rather than gluing it to a fixed value. In the latter scenario, 
one can even take the average of $\hgrad{\psi}G$ and $\lgrad{\psi}G$ to make use of both the gradients.

\item \textbf{The $\mathcal{L}^2 - \mathcal{H}^1$ Conjugate Gradient:} \newline
If one wishes to further boost the descent speed (by, roughly, a factor of two) and make the best use of both the gradients, then the standard Polak-Ribi$\grave{e}$re conjugate gradient scheme (see \cite{F}) can be implemented. The initial search direction at $\psi_0$, is $h_0 = g_0 = \hgrad{\psi_0}G$. At $\psi_m$ one can use the exact or inexact line search routine to minimize $G(\psi)$ in the direction of $h_m$ resulting in $\psi_{m+1}$. Then $g_{m+1} = \hgrad{\psi_{m+1}}G$ and $h_{m+1} = g_{m+1} + \gamma_m h_m$, where
\begin{equation}
    \gamma_m = \frac{\Hprod{g_{m+1} - g_m}{g_{m+1}}}{\Hprod{g_m}{g_m}} = \frac{\Lprod{g_{m+1} - g_m}{\lgrad{\psi_{m+1}}G}}{\Lprod{g_m}{\lgrad{\psi_m}G}}. \label{conjugate gradient}
\end{equation}
\end{enumerate}

\begin{remark}
Though the $\mathcal{L}^2$-$\mathcal{H}^1$ conjugate gradient boost the descent rate, it (sometimes) compromises the accuracy of the recovery, especially when the noise present in the data is extreme, see Table \ref{example1_2 table2} when $\sigma = 0.1$. Now one can also construct a $\mathcal{H}^1$-$\mathcal{H}^1$ conjugate gradient based only on the Sobolev gradient. This is smoother than the $\mathcal{L}^2$-$\mathcal{H}^1$ conjugate gradient (as Sobolev gradients are smoother, see equation \eqref{Sobolev Gradient}) and hence improves the accuracy of the recovered solution (specially for smooth recovery), but it is tad slower than the $\mathcal{L}^2$-$\mathcal{H}^1$ conjugate gradient. Finally, the simple Sobolev gradient ($\hgrad{}G$) provides the most accurate recovery, but at the cost of the descent rate (it the slowest amongst the three), see Tables \ref{example1_2 table2} and \ref{example1_2 table1}. 
\end{remark}{}

\begin{subsection}{The Line Search Method} \mbox{ }\\
We minimize the single variable function $f_{m}(\alpha) = G(\psi_{m+1}(\alpha))$, where $\psi_{m+1}(\alpha)= \psi_{m} - \alpha \hgrad{\psi_{m}}G$, via a line search minimization by first bracketing the minimum and then using some well-known optimization techniques like Brent minimization to further approximate it. Note that the function $f_{m}(\alpha)$ is strictly decreasing in some neighborhood of $\alpha = 0$ as $f_{m}'(0) = -\Hnorm{g_m}^2 < 0$.

In order to achieve numerical efficiency, we need to carefully choose the initial step size $\alpha_0$.  For that, we use the quadratic approximation of the function $f_m(\alpha)$ as follows
\begin{equation}
    f_m(\alpha) \approx G(\psi_{m})- \alpha G'(\psi_m)[g_m] + \frac{1}{2}\alpha^2 G''(\psi_m)[g_m,g_m],
\end{equation}
which gives the minimizing value for $\alpha$ as
\begin{equation}\label{alpha0}
    \alpha_0 = \frac{G'(\psi_m)[g_m]}{G''(\psi_m)[g_m,g_m]}\;.
\end{equation}
Now since $\alpha_0$ is derived from the quadratic approximation of the functional $G$, it is usually very close to the optimal value, thereby reducing the computational time of the descent algorithm significantly. Now if, for $k=0,1,2 \cdots$, $f_m((k+1)\alpha_0) > f_m(k\alpha_0)$ then we have a bracket, $[ \max \{ k-1,0 \} \alpha_0, (k+1)\alpha_0]$, for the minimum and one can use single variable minimization solvers to approximate it.
 
In this section we saw a descent algorithm, with various gradients, where starting from an initial guess $\psi_0 \in \L$, we obtain a sequence of $\mathcal{L}^2$-functions $\psi_m$ for which the sequence $\{ G(\psi_m)\}$ is strictly decreasing. 
In the next section, we discuss the convergence of the $\psi_m$'s to $\varphi$ and the stability of the recovery.
\end{subsection}
\end{section}

\section{\textbf{Convergence, Stability and Conditional Well-posedness}}\label{Convg, Stab, Error}
\textbf{Exact data:} We first prove that the sequence of functions constructed during the descent process converges to the exact source function in the absence of any error term and then proves the stability of the process in the presence of noise in the data.
\subsection{Convergence}
First we see that for the sequence $\{ \psi_m\}$ produced by the steepest descent algorithm, described in Section \ref{G-descent}, we have $G(\psi_m) \rightarrow 0$, { since the functional $G$ is non-negative and strictly convex (with the global minimizer $\varphi$, $G(\varphi) = 0$) and $G(\psi_{m+1}) < G(\psi_m)$}. In this subsection we prove that if for any sequence of functions $\{\psi_m\} \subset \L$ such that $G(\psi_m) \rightarrow 0$ then $\psi_m \xrightarrow{w} \varphi$ in $\L$, where $\xrightarrow{w}$ denotes weak convergence in $\L$.

\begin{theorem}\label{convg}
Suppose that $\{ \psi_m \}$ is any sequence  of $\mathcal{L}^2$-functions such that the sequence $\{ G(\psi_m) \}$ tends to zero. Then $\{ \psi_m \}$ converges weakly to $\varphi$ in $\L$ and $\{ u_{\psi_m} \}$ converges strongly to $u$ in $\H$. Also, the sequence $\{ g_m \}$ converges weakly to $g_1$ in $\L$, where $g_m = - u_{\psi_m}''$ and $g_1= -u''$.
\end{theorem}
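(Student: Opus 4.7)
The plan is to extract every conclusion from the two-sided bound \eqref{Gbounds} and then propagate the resulting $\mathcal{H}^1$ convergence of $u_{\psi_m}$ down to weak $\mathcal{L}^2$ information about $g_m$ and $\psi_m$ by repeated integration by parts against smooth, compactly supported test functions.

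First I would dispatch the strong-convergence claim, which is the cleanest. The lower bound in \eqref{Gbounds} gives
\begin{equation*}
\|u_{\psi_m}-u\|_{\mathcal{H}^1}^{2}\;\leq\;\bigl(1+\lambda_1^{-1}\bigr)\,G(\psi_m)\longrightarrow 0,
\end{equation*}
so $u_{\psi_m}\to u$ in $\mathcal{H}^1$ and, in particular, $u_{\psi_m}\to u$ and $u'_{\psi_m}\to u'$ strongly in $\mathcal{L}^2$. This settles the middle assertion immediately and provides the only analytic input needed for the remaining two.

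Next I would prove the weak convergences by duality against $C_0^\infty(a,b)$ test functions. Since $g_m=-u''_{\psi_m}=T_D\psi_m$ and $g_1=-u''=T_D\varphi$, for any $\phi\in C_0^\infty(a,b)$ two integrations by parts (all boundary terms vanish because $\phi$ is compactly supported) yield
\begin{equation*}
\Lprod{g_m-g_1}{\phi}=-\int_a^b(u_{\psi_m}-u)''\,\phi\,dx=-\int_a^b(u_{\psi_m}-u)\,\phi''\,dx\longrightarrow 0.
\end{equation*}
Because $T_D\psi\in\H$ for $\psi\in\L$, each $u_{\psi_m}$ is actually in $\HHH$ and we may differentiate once more in the distributional sense: $\psi_m=-u'''_{\psi_m}$ and $\varphi=-u'''$, so for the same $\phi$,
\begin{equation*}
\Lprod{\psi_m-\varphi}{\phi}=-\int_a^b\bigl(u'_{\psi_m}-u'\bigr)\phi''\,dx\longrightarrow 0
\end{equation*}
by the $\mathcal{L}^2$-convergence of $u'_{\psi_m}$ obtained in the first step. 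Upgrading these limits from $C_0^\infty(a,b)$ to all of $\L$ is a standard density argument provided $\{g_m\}$ and $\{\psi_m\}$ are bounded in $\L$, and the former boundedness will follow from the latter because $T_D$ is bounded on $\L$ and $g_m=T_D\psi_m$.

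The main obstacle is exactly that last point: neither bound follows automatically from $G(\psi_m)\to 0$, because $G$ is not coercive on $\L$. Indeed $u-u_\psi=\Delta^{-1}T_D(\varphi-\psi)$ with $\Delta^{-1}T_D$ compact, so a highly oscillatory perturbation $\psi_m=\varphi+h_m$ with $\|h_m\|_{\L}$ arbitrarily large can still make $G(\psi_m)$ tend to zero. Consequently the step that really has to be argued -- and which I expect to absorb the most care -- is the $\L$-boundedness of the sequence. I would handle this either by invoking the specific structure of the descent-generated sequence (the iterates stay inside a bounded level set determined by $\psi_{\text{initial}}$ once one controls $\alpha_m$ through the line-search in Section \ref{G-descent}), or, failing that, by weakening the statement to distributional/weak-$*$ convergence, which is all the integration-by-parts argument above genuinely yields. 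Once the boundedness is in hand, weak-$\ast$ compactness together with uniqueness of the distributional limit $\varphi$ promotes subsequential weak convergence to convergence of the whole sequence, completing the proof.
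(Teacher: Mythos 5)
Your proposal follows essentially the same route as the paper's own proof: the strong $\H$ convergence of $u_{\psi_m}$ is read off from the two-sided bound \eqref{Gbounds}, and both weak limits are then obtained by pairing with $\CO$ test functions and integrating by parts. Your identity $\Lprod{\psi_m-\varphi}{\phi}=-\int_a^b (u_{\psi_m}'-u')\phi''\,dx$ is the paper's identity $\Lprod{\psi_m-\varphi}{T_D^*\tilde{\psi}}=\Lprod{T_D(\psi_m-\varphi)}{\tilde{\psi}}$ in disguise, since $T_D^*(-\tilde{\psi}')=\tilde{\psi}$, and your double integration by parts for $g_m$ is equivalent to the paper's single one.

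The place where you add genuine value is the obstruction you flag, and you are right about it: convergence of $\Lprod{\psi_m-\varphi}{\phi}$ for $\phi$ in a dense subset of $\L$ upgrades to weak convergence only if $\{\psi_m\}$ is bounded in $\L$ (uniform boundedness), and boundedness does not follow from $G(\psi_m)\to 0$ because $\psi\mapsto u_\psi$ factors through the compact, smoothing operator $\Delta^{-1}T_D$. Your oscillatory perturbation is decisive: with $\psi_m=\varphi+m\sin(m^2x)$ one has $\Linfty{T_D(\psi_m-\varphi)}=O(1/m)$, hence $G(\psi_m)\to 0$ by the elliptic estimate used in Lemma \ref{ueconvg}, yet $\Lnorm{\psi_m}\to\infty$, so $\{\psi_m\}$ cannot converge weakly at all. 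Thus the theorem as stated for an \emph{arbitrary} sequence is false, and the paper's proof contains exactly the gap you identified --- it asserts ``by the density of $\CO$ in $\L$ it can be proved'' without supplying the required uniform bound; the same caveat applies to the claim for $g_m=T_D\psi_m$, since $\Lnorm{\Delta(u_{\psi_m}-u)}$ is likewise not controlled by $G(\psi_m)$. Your two proposed repairs --- restricting to the descent-generated iterates (or otherwise assuming an a priori $\L$ bound) so that weak-$*$ compactness plus uniqueness of the distributional limit gives the conclusion, or else downgrading the conclusion to distributional convergence --- are the correct ways to make the statement true; only the strong $\H$ convergence of $u_{\psi_m}$ survives without any additional hypothesis.
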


\begin{proof}
The proof of $ u_{\psi_m} \xrightarrow{s} u$ in $\H$ is trivial from the bounds of $G(\psi)$ in equation (\ref{Gbounds}), which gives
\begin{equation}
    \Hnorm{u - u_{\psi_m}}^2 \leq \left( 1 + \frac{1}{\lambda_1} \right)\; G(\psi_m) \;.
\end{equation}
 To see the weak convergence of $\{ \psi_m \}$ to $\varphi$ in $\L$, we first prove that the sequence $\{ T_D \psi_m \}$ converges weakly to $T_D \varphi$ in $\L$. Since $\{ u_{\psi_m} \}$ converges strongly to $u$ in $\H$, this implies $\{ u_{\psi_m}' \}$ and $\{ u_{\psi_m} \}$ converge weakly to $u'$ and $u$ in $\L$, respectively. Now we will use the fact that $\CO$ is dense in $\L$ (in $\mathcal{L}^2$-norm), i.e., for any $\psi \in \L$ and $\epsilon > 0$ there exists a $\tilde{\psi} \in \CO$ such that $\Lnorm{\psi - \tilde{\psi}} \leq \epsilon$. So for any $\tilde{\psi} \in \CO$ we have, using $\tilde{\psi}' \in \L$,
\begin{align}\label{w-convg of Tpsim}
    \Lprod{T_D \psi_m - T_D \varphi}{\tilde{\psi}} &= \Lprod{-\Delta (u_{\psi_m} - u)}{\tilde{\psi}}\\
    &= \Lprod{\nabla(u_{\psi_m} - u)}{\tilde{\psi}'}\;, \notag 
\end{align}
which tends to zero as $m \rightarrow \infty$. Hence by the density of $\CO$ in $\L$, it can be proved that the sequence $\{ T_D \psi_m \}$ converges weakly to $T_D \varphi$ in $\L$.

To prove convergence of $\{ \psi_m \}$ to $\{ \varphi \}$ weakly in $\L$, we can use $\Lprod{T_D(\psi_m - \varphi)}{\psi} = \Lprod{\psi_m - \varphi}{T_D^*\;\psi}$ and \ref{w-convg of Tpsim}. Therefore, our proof will be complete if we can show that the range of $T_D^*$ is dense in $\L$ in $\mathcal{L}^2$-norm. Again we start with any $\tilde{\psi} \in \CO$, and using $\tilde{\psi}' \in \L$, we have
\begin{align*}
    (T_D^*(-\tilde{\psi}'))(x) &= -\int_x^b \tilde{\psi}'(\eta) \; d\eta \\
    &= \tilde{\psi}(x),
\end{align*}
i.e., $\CO \subset Range(T_D^*)$. Hence we have $Range(T_D^*)$ dense in $\L$ in $\mathcal{L}^2$-norm.
For convergence of $\{ g_m\}$ to $g_1$, note that $T_D \psi_m = - u_{\psi_m}'' = g_m$ and $T_D \varphi = -u'' =g_1$. And since $\{ T_D \psi_m \}$ converges weakly to $T_D \varphi$ in $\L$ implies $\{ g_m \}$ converges weakly to $g_1$ in $\L$. 
\end{proof}

{
\begin{remark}
In can be further proved that the sequence $\{\psi_m\}$ converges strongly to $\varphi$ in $\L$. To prove this, first, one needs to analyze the operator associated with the minimizing functional $G$ as defined in \eqref{Gc}, i.e., from the definition of $u_\psi$ in \eqref{uc} together with the criterion $u_\psi \in \HHO$ we have (from \eqref{H20version}, with $g = T_D\psi$ and $(T_D\psi)(a)=0$, by the definition of $T_D$)
\begin{align}\label{upsiprime}
    u_\psi' = - \int_a^x (T_D\psi)(\xi) d\xi + \frac{1}{b-a} \int_a^b \int_a^\eta (T_D\psi)(\xi) d\xi d\eta.
\end{align}{}
From the expression \eqref{upsiprime} the operator $L(\psi):= u_\psi'$ is both linear and bounded, and hence minimizing the functional $G$ in \eqref{Gc} is equivalent to Landweber iterations corresponding to the operator $L$. Then from the convergence theories developed for Landweber iterations the sequence $\{ \psi_m\}$ converges to $\varphi$ strongly in $\L$, for details on iterative regularization see \cite{Hankle, Kaltenbacher+Neubauer+Scherzer}.
\end{remark}{}
}

Theorem \ref{convg} proves that for the given function $g_1 \in \L$ we are able to construct a sequence of smooth functions, $\{ g_m \} \subset \L$, that converges (weakly) to $g_1$ in $\L$. This is critical since, when the data has noise in it one needs to terminate the descent process at an appropriate instance to attain regularization, see \S \ref{Conditional Well-posedness}, and the (weak) convergence helps us to construct such a stopping criterion in the absence of noise information ($\delta$), see \S \ref{Stopping Criteria}.\\ 

\textbf{Noisy data:} In this subsection we consider the data has noise in it and shows that the sequence of functions constructed during the descent process using the noisy data still approximates the exact solution, under some conditions.
\subsection{Stability}
Here we prove the stability of the process. We will prove this by considering the problem of numerical differentiation as equivalent to finding a unique minimizer of the positive functional $G$, this makes the problem well-posed. That is, for a given $g \in \H$, and hence a given $u \in \HHHO$ and the functional $G$, the problem of finding (derivative) $\varphi \in \L$ such that $g' = \varphi$ is equivalent to finding the minimizer of the functional $G$, i.e., a $\psi \in \L$ such that $G(\psi) \leq \epsilon$, for any small $\epsilon > 0$, is a conditionally well-posed problem. It is not hard to prove that if two functions $g$, $\tilde{g} \in \H$ are such that $\Lnorm{g - \tilde{g}} \leq \delta$, where $\delta > 0$ is small, then the corresponding $u$, $\tilde{u} \in \HHHO$ also satisfy\footnote{\label{*}just for simplicity we assume $g(a) = \tilde{g}(a)$.} $\Hnorm{u - \tilde{u}} \leq C\delta$, for some constant $C$.

\begin{theorem}\label{stability1}
Suppose the function $\tilde{u}$ in the perturbed version of the function $u$ such that $\Hnorm{u - \tilde{u}} \leq \delta$, where $\delta > 0$, and let $\varphi$, $\tilde{\varphi} \in \L$ denote their respective recovered functions, such that $T_D \varphi = -u''$ and $T_D \tilde{\varphi} = -\tilde{u}''$. Let the functional $G$, without loss of generality, be defined based on $\tilde{u}$, that is, $G(\tilde{\varphi}) = 0$, then we have 
\begin{align}\label{stability}
     0 \leq G(\varphi) \leq C \delta^2 \;,
\end{align}
where $C$ is some constant. 
\end{theorem}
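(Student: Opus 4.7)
My plan is as follows. The only nontrivial ingredient is identifying which $u_\psi$ corresponds to $\psi = \varphi$; once that is done, the estimate reduces to the upper bound already established in \eqref{Gbounds}.

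First I would show that $u_\varphi = u$. The function $u_\varphi$ is defined as the unique element of $\HHO$ solving
\begin{align*}
-u_\varphi'' &= T_D \varphi, \\
u_\varphi(a) = \tilde{u}(a),& \quad u_\varphi(b) = \tilde{u}(b),
\end{align*}
and since $\tilde{u} \in \HHHO$ we have $\tilde{u}(a) = \tilde{u}(b) = 0$. On the other hand, by hypothesis $u \in \HHHO$ satisfies $-u'' = T_D \varphi$ with $u(a) = u(b) = 0$. Thus $u$ solves the same boundary value problem as $u_\varphi$, and uniqueness (which follows from positivity of $-\Delta$ on $\HHO$, established in \eqref{Lpos}) gives $u_\varphi = u$.

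Next, since in this section the functional $G$ is based on $\tilde{u}$, the upper bound derived in \eqref{Gbounds} reads, for any $\psi \in \L$,
\begin{equation*}
G(\psi) = \Lnorm{\tilde{u}' - u_\psi'}^2 \leq \Hnorm{\tilde{u} - u_\psi}^2.
\end{equation*}
Substituting $\psi = \varphi$ and using $u_\varphi = u$ yields
\begin{equation*}
G(\varphi) \leq \Hnorm{\tilde{u} - u_\varphi}^2 = \Hnorm{\tilde{u} - u}^2 \leq \delta^2,
\end{equation*}
by the hypothesis $\Hnorm{u - \tilde{u}} \leq \delta$. This gives \eqref{stability} with $C = 1$, and nonnegativity $G(\varphi) \geq 0$ is immediate from the definition of $G$ as a squared norm.

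No serious obstacle is anticipated: the only conceptual point is recognizing that the BVP defining $u_\varphi$ is solved exactly by $u$ itself (because both belong to $\HHHO$ and share the differential equation), so the ``model error'' $G(\varphi)$ measures nothing more than the discrepancy between $\tilde{u}'$ and $u'$, which is controlled by $\delta$ via the assumed $\mathcal{H}^1$-closeness of the data. If one preferred to avoid invoking $u_\varphi = u$, one could alternatively rewrite $G(\varphi) = \Lnorm{(\tilde{u} - u)'}^2$ directly and bound by $\Hnorm{\tilde{u}-u}^2$; the footnote simplification $g(a) = \tilde{g}(a)$ ensures that the constant $C$ relating $\Hnorm{u-\tilde{u}}$ to $\Lnorm{g - \tilde{g}}$ remains bounded, so the final estimate retains the claimed quadratic dependence on $\delta$.
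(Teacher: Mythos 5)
Your proof is correct and follows essentially the same route as the paper: identify $u_\varphi = u$ (the paper states this without the BVP/uniqueness justification you supply) and then bound $G(\varphi) = \Lnorm{u' - \tilde{u}'}^2$ by $\Hnorm{u - \tilde{u}}^2 \leq \delta^2$. Your additional detail on why $u_\varphi = u$ and the explicit constant $C = 1$ are fine refinements of the paper's one-line argument.
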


\begin{proof}
{
Since $u_\varphi = u$ and $\tilde{u}_{\tilde{\varphi}} = \tilde{u}$, the proof follows from the definitions of the corresponding functionals, as
\begin{align*}
    G(\varphi) = \Lnorm{u_\varphi' - \tilde{u}'}^2 = \Lnorm{u' - \tilde{u}'}^2 \leq C \delta^2
\end{align*}
}
\end{proof}

In the next theorem we prove that if a sequence of functions $\{ \psi_m \}$ converges to $\tilde{\varphi}$ in $\L$ then it also approximates $\varphi$ in $\L$, that is,  if $G_{\tilde{u}}(\psi_m)$ is small then $G_u(\psi_m)$ is also small where $G_u$ and $G_{\tilde{u}}$ are the functionals formed based on $u$ and $\tilde{u}$, respectively.

\begin{theorem}\label{stability2}
Suppose for a sequence of functions $\{ \psi_m \} \subset \L$, the corresponding sequence $\{ G_{\tilde{u}}(\psi_m)\} \subset \mathbb{R}$ converges to zero, where the functional $G_{\tilde{u}}$ is formed based on $\tilde{u}$, then for the original $u$ such that $\Hnorm{u - \tilde{u}} \leq \delta$, for small $\delta > 0$, there exists a $\; M(\delta) \in \mathbb{N}$ such that for all $m \geq M(\delta)$, $G_{u}(\psi_m) \leq c\; \delta^2$ for some constant $c$ and $G_{u}$ is the functional based on $u$.
\end{theorem}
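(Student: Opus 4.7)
My plan is to prove this directly by the triangle inequality applied to the square root of $G$, exploiting the fact that $u_\psi$ depends only on $\psi$ (through the BVP \eqref{uc}--\eqref{ucbp}) and not on which data $u$ or $\tilde u$ is used — both share the zero boundary values after the redefinition following \eqref{H20version}. So the two functionals differ only in the data they compare $u_\psi'$ against.

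First, I would observe that $G_u(\psi)^{1/2} = \Lnorm{u_\psi' - u'}$ and $G_{\tilde u}(\psi)^{1/2} = \Lnorm{u_\psi' - \tilde u'}$, and then apply the triangle inequality to write
\begin{equation*}
    G_u(\psi_m)^{1/2} = \Lnorm{u_{\psi_m}' - u'} \leq \Lnorm{u_{\psi_m}' - \tilde u'} + \Lnorm{\tilde u' - u'} = G_{\tilde u}(\psi_m)^{1/2} + \Lnorm{\tilde u' - u'}.
\end{equation*}
Next, I would control the last term by the hypothesis $\Hnorm{u-\tilde u}\leq \delta$, since the $\mathcal{H}^1$ norm dominates the $\mathcal{L}^2$ norm of the derivative, giving $\Lnorm{\tilde u' - u'} \leq \Hnorm{u-\tilde u} \leq \delta$.

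Then, because $G_{\tilde u}(\psi_m)\to 0$ by assumption, I can pick $M(\delta)\in\mathbb{N}$ large enough so that $G_{\tilde u}(\psi_m)\leq \delta^2$ for all $m\geq M(\delta)$, whence $G_{\tilde u}(\psi_m)^{1/2}\leq \delta$. Combining the two bounds gives
\begin{equation*}
    G_u(\psi_m)^{1/2} \leq \delta + \delta = 2\delta \quad\text{for all } m\geq M(\delta),
\end{equation*}
so that $G_u(\psi_m) \leq 4\delta^2$, establishing the claim with $c = 4$.

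I don't expect any real obstacle here; this is essentially a robustness statement, and the triangle inequality provides the bridge between the two functionals once one notices that they share the same $u_\psi$ and differ only in the target data. The only subtlety worth noting explicitly in the written proof is that the passage from $\Hnorm{u-\tilde u}\leq \delta$ to $\Lnorm{u'-\tilde u'}\leq \delta$ uses just one component of the Sobolev norm, so the estimate is in fact loose — but this costs nothing and keeps the constant clean.
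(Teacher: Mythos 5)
Your proof is correct and follows essentially the same route as the paper: split $\Lnorm{u' - u_{\psi_m}'}$ via the triangle inequality into the data-discrepancy term $\Lnorm{u'-\tilde u'}\leq\Hnorm{u-\tilde u}\leq\delta$ and the term controlled by $G_{\tilde u}(\psi_m)\to 0$. Your write-up is in fact slightly more careful than the paper's, which states the squared inequality without the cross term (or a factor of $2$) and leaves the constant implicit, and you rightly flag the one point that needs checking — that $u_{\psi_m}$ is the same object in both functionals because both $u$ and $\tilde u$ have zero boundary values after the redefinition.
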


\begin{proof}
For $u$, $\tilde{u}$ and any $\psi_m \in \L$, we have $G_u(\psi_m) = \Lnorm{u' - u_{\psi_m}'}^2$ and $G_{\tilde{u}}(\psi_m) = \Lnorm{\tilde{u}' - u_{\psi_m}'}^2$, then 
\begin{align*}
   G_{u}(\psi_m) &= \Lnorm{u' - u_{\psi_m}'}^2 \\
   &\leq \Lnorm{\tilde{u}' - u'}^2 + \Lnorm{\tilde{u}' - u_{\psi_m}'}^2\\
\end{align*}
and hence from theorem \ref{convg}, the result follows.
\end{proof}
\subsection{Conditional Well-posedness (Iterative-regularization)}\label{Conditional Well-posedness}\mbox{ }\\
{As explained earlier, in an external-parameter based regularization method (like Tikhonov-type regularizations) first, one converts the ill-posed problem to a family of well-posed problem (depending on the parameter value $\beta$) and then, only after finding an appropriate regularization parameter value (say $\beta_0$), one proceeds to the recovery process, i.e. completely minimize the corresponding functional $G(.,\beta_0)$ (as defined in \eqref{tikhonov functional}). Where as, in a classical iterative regularization method (not involving any external-parameters like $\beta$) one can not recover a regularized solution by simply minimizing a related functional completely, instead stopping the recovery process at an appropriate instance provides the regularizing effect to the solution. That is, one starts to minimize some related functional (to recover the solution) but then terminates the minimization process at an appropriate iteration before it has reached the minimum (to restrict the influence of the noise), i.e. here the iteration stopping index serves as a regularization parameter, for details see \cite{Hankle, Kaltenbacher+Neubauer+Scherzer}. 

In this section we further explain the above phenomenon by showing that if one attempts to recover the true (or original) solution $\varphi$ by using a noisy data then it will distort the recovery. First we see that for an exact $g$ (or equivalently, an exact $u$) and the functional constructed based on it (i.e. $G_u$) we have the true solution ($\varphi$) satisfying $G_u(\varphi) = 0$. However, for a given noisy $\tilde{g}$, with $ \Lnorm{\tilde{g} - g} = \delta > 0$, and the functional based on it (i.e. $G_{\tilde{u}}$) we will have $G_{\tilde{u}} (\varphi) > 0$, see Theorem \ref{error analysis}. So if we construct a sequence of functions $\psi_m^\delta \in \L$, using the descent algorithm and based on the noisy data $\tilde{g}$, such that $G_{\tilde{u}}(\psi_m^\delta) \rightarrow 0$ then (from Theorem \ref{convg}) we will have $\psi_m^\delta \rightarrow \tilde{\varphi}$, where $\tilde{\varphi}$ is the recovered noisy solution satisfying $G_{\tilde{u}}(\tilde{\varphi}) = 0$. This implies initially $\psi_m^\delta \rightarrow \varphi$ and then upon further iterations $\psi_m^\delta$ diverges away from $\varphi$ and approaches $\tilde{\varphi}$. Hence, the errors in the recoveries $\Lnorm{\psi_m^\delta - \varphi}$ follow a semi-convergence nature, i.e. decreases first and then increases. This is a typical behavior of any ill-posed problem and is managed, as stated above, by stopping the descent process at an appropriate iteration $M(\delta)$ such that $G_{\tilde{u}}(\psi_{M(\delta)}^\delta) > 0$ but close to zero (due to the stability Theorems \ref{stability1} and \ref{stability2}).} Following similar arguments as in \eqref{Gbounds} we can have a lower bound for $G_{\tilde{u}}(\varphi)$.
\begin{theorem}\label{error analysis}
Given two functions $u$, $\tilde{u} \in \HHHO$, their respective recovery $\varphi$, $\tilde{\varphi} \in \L$, such that $T_D \varphi = -u''$ and $T_D \tilde{\varphi} = -\tilde{u}''$, and let  the functional $G_{\tilde{u}}$ be defined based on $\tilde{u}$, that is, $G_{\tilde{u}}(\tilde{\varphi}) = 0$, then we have 
\begin{subequations}
\begin{align} \label{lower G bounds}
     G_{\tilde{u}}(\varphi) = \Lnorm{u' - \tilde{u}'}^2 \geq \lambda_1 \Lnorm{u - \tilde{u}}^2\;,
\end{align}
as {an} $\mathcal{L}^2$-lower bound and for a $\mathcal{H}^1$-lower bound, we have 
\begin{equation}
     G_{\tilde{u}}(\varphi) = \Lnorm{u' - \tilde{u}'}^2 \geq \left( 1 + \frac{1}{\lambda_1} \right)^{-1} \Hnorm{u - \tilde{u}}^2
\end{equation}
\end{subequations}
where $\lambda_1 = \frac{\pi^2}{(b- a)^2}$ is the smallest eigenvalue of $-\Delta$ on $\HHO$.
\end{theorem}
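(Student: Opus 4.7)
The plan is to mirror the derivation of the upper bound \eqref{Gbounds} but in reverse: rather than bounding $G_{\tilde u}(\psi)$ above by $\Hnorm{u - u_\psi}^2$, I will bound $G_{\tilde u}(\varphi)$ below by applying a Poincar\'e-type inequality to the difference $v := u - \tilde u$. The hypothesis that both $u$ and $\tilde u$ lie in $\HHHO$, together with the fact that $-\Delta$ is positive on $\HHO$ (equation \eqref{Lpos}), is exactly what makes this possible.

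First I would identify $G_{\tilde u}(\varphi)$ explicitly. In the functional $G_{\tilde u}$ the auxiliary function $u_\psi$ is defined by the boundary value problem \eqref{uc}--\eqref{ucbp} with boundary data inherited from $\tilde u$, which is zero at both endpoints. For $\psi = \varphi$ the source becomes $T_D\varphi = -u''$, so both $u$ itself and $u_\varphi$ solve the same homogeneous Dirichlet problem; uniqueness (a consequence of the positivity of $-\Delta$ on $\HHO$) forces $u_\varphi = u$. Therefore
\begin{equation*}
G_{\tilde u}(\varphi) \; = \; \Lnorm{u_\varphi' - \tilde u'}^2 \; = \; \Lnorm{u' - \tilde u'}^2.
\end{equation*}

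Next, set $v := u - \tilde u$. Since $u,\tilde u \in \HHHO$, the difference $v$ belongs to $\HHO$, so I can invoke the spectral theory of the Dirichlet Laplacian on $[a,b]$: expanding $v$ in the orthonormal sine basis $\{\sqrt{2/(b-a)}\sin(k\pi(x-a)/(b-a))\}_{k\geq 1}$ gives the sharp Poincar\'e inequality $\Lnorm{v'}^2 \geq \lambda_1 \Lnorm{v}^2$ with $\lambda_1 = \pi^2/(b-a)^2$, which is exactly the smallest eigenvalue of $-\Delta$ on $\HHO$. This is precisely the first stated lower bound. For the $\mathcal H^1$-version I combine this with the definition of the Sobolev norm,
\begin{equation*}
\Hnorm{v}^2 \; = \; \Lnorm{v}^2 + \Lnorm{v'}^2 \; \leq \; \left( 1 + \tfrac{1}{\lambda_1} \right)\Lnorm{v'}^2,
\end{equation*}
which rearranges immediately to $\Lnorm{u' - \tilde u'}^2 \geq (1 + \lambda_1^{-1})^{-1}\Hnorm{u - \tilde u}^2$.

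There is no real obstacle here: the argument is literally the dual of the upper-bound calculation performed around \eqref{primel2}--\eqref{Gbounds}, and the only subtlety worth underlining is the identification $u_\varphi = u$, which needs the fact that $u$ and $\tilde u$ share their (vanishing) boundary values so that both solve the \emph{same} Dirichlet problem. The sharp constant $\lambda_1 = \pi^2/(b-a)^2$ is classical and can either be quoted or derived in one line from the explicit eigenfunctions of $-\Delta$ on $\HHO$.
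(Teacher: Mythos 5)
Your proof is correct and follows essentially the same route the paper intends: the paper gives no separate argument for Theorem \ref{error analysis} beyond the remark that it follows "by similar arguments as in \eqref{Gbounds}," i.e., exactly the Poincar\'e inequality for $v = u - \tilde u \in \HHO$ and the identity $\Hnorm{v}^2 = \Lnorm{v}^2 + \Lnorm{v'}^2$ that you use. Your explicit justification of $u_\varphi = u$ via uniqueness for the Dirichlet problem is a detail the paper leaves implicit, and it is handled correctly.
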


Therefore, combining Theorems \ref{stability1} and \ref{error analysis} we have the following two sided inequality for $G_{\tilde{u}}(\varphi)$, for some constants $C_1$ and $C_2$,
$$0 \leq C_1 \Hnorm{u - \tilde{u}}^2 \leq G_{\tilde{u}}(\varphi) \leq C_2 \Hnorm{u - \tilde{u}}^2.$$

Thus, when $\delta \rightarrow 0$ we have $G_{\tilde{u}}(\varphi) \rightarrow 0$ which implies $\varphi_\delta \rightarrow \varphi$ in $\L$. 
Now though we would like to use the bounds in Theorem \ref{error analysis} to terminate the descent process, but we do not known the exact $g$ (or equivalently, the exact $u$), and hence can not use that as the stopping condition. However, if the error norm $\delta = \Lnorm{g - g_\delta}$ is known then one can use \textit{Morozov's discrepancy principle}, \cite{Morozov_1}, as a stopping criterion for the iteration process, that is, terminate the iteration when 
\begin{equation}\label{morozov principle}
\Lnorm{T\psi_m - g_\delta} \leq \tau \delta
\end{equation}
for an appropriate $\tau > 1$\footnote{In our experiments, we considered $\tau = 1$ and the termination condition as $\Lnorm{T\psi_m - g_\delta} < \delta$.}, and for unknown $\delta$ one usually goes for heuristic approaches to stop the iterations, an example of which is presented in \S \ref{Stopping Criteria}.

\begin{section}{\textbf{Numerical Implementation}}\label{NI}
In this section we provide an algorithm to compute the derivative numerically. Notice that though one can use the integral operator equation (\ref{TDeq}) to recover $\varphi$ inversely, for computational efficiency we can further improve the operator and the operator equation, but keeping the theory intact. First we see that the adjoint operator $T_D^*$ can also provide an integral operator equation of interest,
\begin{equation}\label{TD*eq}
    -T_D^* \; \varphi = g_2 \;,
\end{equation}
where $g_2(x) = g(x) - g(b)$ and 
\begin{equation}
    (T_D^* \; \varphi)(x) := \int_x^b \varphi(\xi) \; d\xi \;, \nonumber
\end{equation}
for all $x \in [a,b] \subset \mathbb{R}$.  Now we can combine both the operator equations \eqref{TDeq} and \eqref{TD*eq} to get the following integral operator equation
\begin{equation} \label{Teq}
    T \; \varphi = g_3 \;,
\end{equation}
where $g_3(x) = 2g(x) - (g(a) + g(b))$ and for all $x \in [a,b] \subset \mathbb{R}$,
\begin{align}\label{T}
    (T \; \varphi)(x) &:= (T_D \varphi - T_D^* \varphi)(x) \\
    &= \int_a^x \varphi(\xi) \;d\xi - \int_x^b \varphi(\xi)\; d\xi\;. \notag
\end{align}
The advantage of the operator equation \eqref{Teq} over \eqref{TDeq} or \eqref{TD*eq} is that it recovers $\varphi$ symmetrically at the end points. For example if we consider the operator equation \eqref{TD*eq} for the recovery of $\varphi$, then during the descent process the $\mathcal{L}^2$-gradient at $\psi$ will be $\lgrad{\psi}G = -T_D(-2(u - u_\psi))$ (similar to (\ref{Gl2grad})) which implies $\lgrad{\psi}G(a) = 0$ for all $\psi \in \L$. Hence the boundary data at $a$ for all the evolving $\psi_m$'s are going to be invariant during the descent\footnote{as explained in the $\mathcal{L}^2$-gradient version of the descent algorithm for $G$, in Section \ref{G-descent}.}. As for \eqref{TDeq}, since $\lgrad{\psi}G(b) = 0$, the boundary data at $b$ for all the evolving $\psi_m$'s are going to be invariant during the descent. Even though one can opt for the Sobolev gradient of $G$ at $\psi$, $\hgrad{\psi}G$, to counter that problem but, due to the intrinsic decay of the base function $\lgrad{\psi}G$ for all $\psi_m$'s at $a$ or $b$, the recovery of $\varphi$ near that respective boundary will not be as good (or symmetric) as at the other end. On the other hand if we use the operator equation (\ref{Teq}) for the descent recovery of $\varphi$ then the $\mathcal{L}^2$-gradient at $\psi$ is going to be 
\begin{align}\label{Tl2grad}
    \lgrad{\psi}G &= T^*(-2(u - u_\psi))
\end{align}
where
\begin{align}
T^* &= T_D^* - T_D \;\;\; \text{ or } \;\;\; T^* = -T \;. \label{T*}
\end{align}
Thus $\lgrad{\psi}G(a)\neq 0$ and $\lgrad{\psi}G(b)\neq 0$, and hence the recovery of $\varphi$ at both the end points will be performed symmetrically. Now one can derive other gradients, like the Neuberger or conjugate gradient based on this $\mathcal{L}^2$-gradient, for the recovery of $\varphi$ depending on the scenarios, that is, based on the prior knowledge of the boundary information (as explained in Section \ref{G-descent}).

Corresponding to the operator equation (\ref{Teq}), the smooth or integrated data $u \in \HHHO$ will be 
\begin{align}\label{unew}
    u(x) &= 2\int_x^b \int_a^\eta g(\xi) \; d\xi d\eta - (g(a) + g(b))\left[ \frac{(b - a)^2}{2} - \frac{(x - a)^2}{2} \right] \notag \\
    & - \frac{b - x}{b - a} \left[ 2\int_a^b \int_a^\eta g(\xi)\; d\xi d\eta -(g(a) + g(b))\frac{(b - a)^2}{2} \right]
\end{align}
Thus our problem set up now is as follows: for a given $g \in \H$ (and hence a given $u\in\HHHO$) we want to find a $\varphi \in \L$ such that    
\begin{align}
 T \varphi = -u''.
\end{align}
Our inverse approach to achieve $\varphi$ will be to minimize the functional $G$ which is defined, for any $\psi \in \L$, by
\begin{align}\label{Gnew}
    G(\psi) = \Lnorm{u' - u_\psi '}\;,
\end{align}
where $u$ is as defined in (\ref{unew}) and $u_\psi$ is the solution of the boundary value problem 
\begin{align}\label{newupsi}
    T \psi &= -u_\psi'' \;,\\
    u_\psi(a) = u(a) \;\; &\text{ and } \;\; u_\psi(b) = u(b). \notag
\end{align}

Since our new problem set up is almost identical to the old one, the previous theorems and results developed for $T_D$ can be similarly extended to $T$. Next we provide a pseudo-code, Algorithm \ref{Algorithm}, for the descent algorithm described earlier.

\begin{remark}
If prior knowledge of $\varphi(a)$ and $\varphi(b)$ is known then $\psi_0$ can be defined as a straight line joining them and we use Dirichlet Neuberger gradient for the descent. If no prior information is known about $\varphi(a)$ and $\varphi(b)$ then we simply choose $\psi_0 \equiv 0$ and use the Sobolev gradient or conjugate gradient for the descent. It has been numerically seen that having any information of $\varphi(a)$ or $\varphi(b)$ and using it, together with appropriate gradient, significantly improves the convergence rate of the descent process and the efficiency of the recovery. In the examples presented here we have not assumed any prior knowledge of $\varphi(a)$ or $\varphi(b)$ to keep the problem settings as pragmatic as possible.  
\end{remark}

\begin{remark}
To solve the boundary value problem (\ref{h1gradeq}) while calculating the Neuberger gradients we used the invariant embedding technique for better numerical results (see \cite{D}). This is very important as this technique enables us to convert the boundary value problem to a system of initial and final value problems and hence one can use the more robust initial value solvers, compared to boundary value solvers, which normally use shooting methods.
\end{remark}

\begin{remark}
For all the numerical testings presented in Section \ref{R} we assumed to have prior knowledge on the error norm ($\delta$) and used it as a stopping criteria, as explained in Section \ref{Conditional Well-posedness}, for the descent process. We also compare the results obtained without any noise information (i.e., using heuristic stopping strategy, see \S \ref{Stopping Criteria}) with the results obtained using the noise information, see Tables \ref{example1_2 table2} and \ref{example1_2 table1} in \S \ref{Stopping Criteria}.
\end{remark}

\begin{algorithm}[ht]
 \KwData{The given noisy $g \in \H$ is converted to a smooth $u \in \HHHO$, as in \ref{unew}.}
 \KwResult{ \begin{enumerate}
     \item Variational recovery of the derivative $g'$, that is, finding $\tilde{\varphi} \approx g'$.
     \item A smooth approximation of the noisy $g$, that is, calculating $T\tilde{\varphi}$.
 \end{enumerate}}
 \textbf{Initialization:} Start with an initial guess $\psi = \psi_0$.
 \textbf{Definition:} Define $u_\psi$, $G(\psi)$, $T^*(\psi)$, $T(\psi)$ as in \ref{newupsi}, \ref{Gnew}, \ref{T*}, \ref{T}, respectively. \\
 \While{$\Lnorm{T\psi - \tilde{g}} \geq \Lnorm{g - \tilde{g}}$}{
  $\lgrad{\psi}G = T^*(-2(u - u_\psi))$\\
  $\hgrad{\psi}G = (I - \Delta)^{-1}\lgrad{\psi}G$ with appropriate boundary conditions.\\
  $g = \hgrad{\psi}G$, for Dirichlet conditions; and for Neumann conditions,
  $g = h_m$, the conjugate gradient from \ref{conjugate gradient}.  \\
  $\psi_{+}$ := $@(\alpha)$ $\psi$ - $\alpha \; g$\\
  $f$ := $@(\alpha) \; G(\psi_+(\alpha))$\\
  find the optimum $\alpha$: $\tilde{\alpha}$ from (\ref{alpha0})\;
  \eIf{$G(\psi_+(\tilde{\alpha})) > G(\psi)$}{
   minimize $f(\alpha)$ in $[0, \tilde{\alpha}]$\\
   $\tilde{\alpha} = \argmin_{[0,\tilde{\alpha}]} \; f(\alpha)$\;
   }{
   $\alpha_1 = \tilde{\alpha}$ and $f_1 = f(\alpha_1)$\\
   $\alpha_2 = \alpha_1 + \tilde{\alpha}$ and $f_2 = f(\alpha_2)$\\
   \While{$f_2 < f_1$}{
   $\alpha_1 = \alpha_2$ and $f_1 = f_2$\\
   $\alpha_2 = \alpha_1 + \tilde{\alpha}$ and $f_2 = f(\alpha_2)$
   }
   minimize $f(\alpha)$ in $[\alpha_1, \alpha_2]$\\
   $\tilde{\alpha} = \argmin_{[\alpha_1,\alpha_2]} \; f(\alpha)$\;
  }
  $\psi = \psi - \tilde{\alpha}\;g$\;
 }
 \caption{The Descent Algorithm}\label{Algorithm}
\end{algorithm}

\end{section}

\begin{section}{\textbf{Results}}\label{R}
A MATLAB program was written to test the numerical viability of the method. We take an evenly spaced grid with $h = 10^{-2}$ in all the examples, unless otherwise specified. In all the examples we used the discrepancy principle (see \ref{morozov principle}) to terminate the iterations when the discrepancy error goes below $\delta$ (which is assumed to be known). In section \ref{Stopping Criteria} we discuss the stopping criterion when $\delta$ is unknown.

\begin{example}\textbf{[Comparison with standard regularization methods]}\label{Example1_2}\\
In this example we compare the inverse recovery using our technique with some of the standard regularization methods. We again perturbed the smooth function $g(x) = \cos (x)$, { here we consider the regularity of the data as $\mathcal{H}^1$ (i.e., $g \in \mathcal{H}^{(k+1)}[a,b]$ for $k=0$)}, on $[-.5,.5]$ by random noises to get $\tilde{g}(x) = g(x) + \epsilon (x)$, where $\epsilon$ is a normal random variable with mean $0$ and standard deviation $\sigma$. Like in \cite{Knowles and Renka} we generated two data sets, one with $h=10^{-2}$ (the dense set) and other with $h=10^{-1}$ (the sparse set). We tested with $\sigma = 0.01$ on both the data sets and $\sigma = 0.1$ only on the dense set. We compare the relative errors, $\frac{\Lnorm{\varphi - \tilde{\varphi}}}{\Lnorm{\varphi}}$, obtained in our method (using Neumann $\hgrad{}G$-gradient) with the relative errors provided in \cite{Knowles and Renka}, which are listed in the Table \ref{example1_2 table}.
\begin{table}
    \centering
\begin{tabular}{ |p{3.6cm}||p{2.5cm}|p{2.5cm}|p{2cm}|}
 \hline
 \multicolumn{4}{|c|}{Relative errors in derivative approximation } \\
 \hline
 Methods & m=100(h=0.01), $\sigma = 0.01$ & m=100(h=0.01), $\sigma = 0.1$ & m=10(h=0.1), $\sigma=0.01$\\
  \hline
 Degree-2 polynomial & 0.0287 & 0.3190 & 0.2786 \\
 Tikhonov, k = 0 & 0.7393 & 0.8297 & 0.7062 \\
 Tikhonov, k = 1 & 0.1803 & 0.3038 & 0.6420 \\
 Tikhonov, k = 2 & 0.0186 & 0.0301 & 0.4432 \\
 Cubic Spline & 0.1060 & 1.15 & 0.3004 \\
 Convolution smoothing & 0.1059 & 0.8603 & 0.2098 \\ 
 Variational method & 0.1669 & 0.7149 & 0.3419 \\ 
 \hline
 Our method (k=0) & 0.0607 & 0.0839 & 0.1355 \\
 \hline
\end{tabular}
     \caption{}
    \label{example1_2 table}
\end{table}
Here we can see that our method of numerical differentiation outperforms most of the other methods, in both the dense and sparse situation. Though Tikhonov method performs better for $k=2$, that is when $g$ is assumed to be in $\HHH$, but for the same smoothness consideration, $g \in \H$ or $k=0$, it fails miserably. In fact, one can prove that the ill-posed problem of numerical differentiation turns out to be well-posed when $g \in \mathcal{H}^{(k+1)}([a,b])$ for $k = 1,2$, see \cite{Hankle}, which explains the small realtive errors in Tikhonov regularization for $k=1,2$. As stated above the results in Table \ref{example1_2 table} is obtained using the Sobolev gradient, where as Tables \ref{example1_2 table1} and \ref{example1_2 table2} show a comparison in the recovery errors using different gradients and different stopping criteria, as well as the descent rates associated with them. To compare with the total variation method, which is very effective in recovering discontinuties in the solution, we perform a test on a sharp-edged function similar to the one presented in \cite{Knowles and Renka} and the results are shown in Example \ref{Example4}. Hence we can consider this method as an universal approach in every scenarios.
\end{example}

In the next example we show that one does not have to assume the normality conditions for the error term, i.e., the assumption that the noise involved should be iid normal random variables is not needed, which is critical for certain other methods, rather it can be a mixture of any random variables.

\begin{example}\textbf{[Noise as a mixture of random variables]}\label{Example2}\\
In the previous example we saw that our method holds out in the presence of large error norm. Here we show that this technique is very effective even in the presence of extreme noise. We perturb the function $g(x) = \sin (x/3)$ on $[0,3\pi]$ to $\tilde{g}(x) = g(x) + \epsilon(x)$\footnote{to be consistent with \footnoteautorefname{\ref{*}} we kept $\tilde{g}(a) = g(a)$ and $\tilde{g}(b) = g(b)$, but can be avoided.}, where $\epsilon$ is the error function obtained from a mixture of uniform($-\delta$, $\delta$) and normal(0, $\delta$) random variables, where $\delta = 0.5$. Figure \ref{Noisy g1} shows the noisy $\tilde{g}$ and the exact $g$, and Figure \ref{example2_1} shows the computed derivative $\tilde{\varphi}$ vs. $\varphi(x) = \cos (x/3)/3$. The relative error for the recovery of $\tilde{\varphi}$ is $0.0071$.
\end{example}

In the next example we further pushed the limits by not having a zero-mean error term, which is again crucial for many other methods.

\begin{example}{\textbf{[Error with non-zero mean]}}\label{Example3_2}\\
In this example we will show that this method is impressive even when the noise involved has nonzero mean. We consider the settings of the previous example: $g(x) = \sin (x/3)$ on $[0,3\pi]$ is perturbed to $\tilde{g}(x) = g(x) + \epsilon(x)$ but here the error function $\epsilon$ is a mixture of uniform(-0.8$\delta$, 1.2$\delta$) and normal(0.1, $\delta$), for $\delta = 0.1$. Figure \ref{nonzeromean} shows the recovery of the derivative $\varphi$ versus the true derivative. The relative error of the recovery for $\varphi$ is around 0.0719.
\end{example}

\begin{figure}[ht]
    \centering
    \includegraphics[scale = .3]{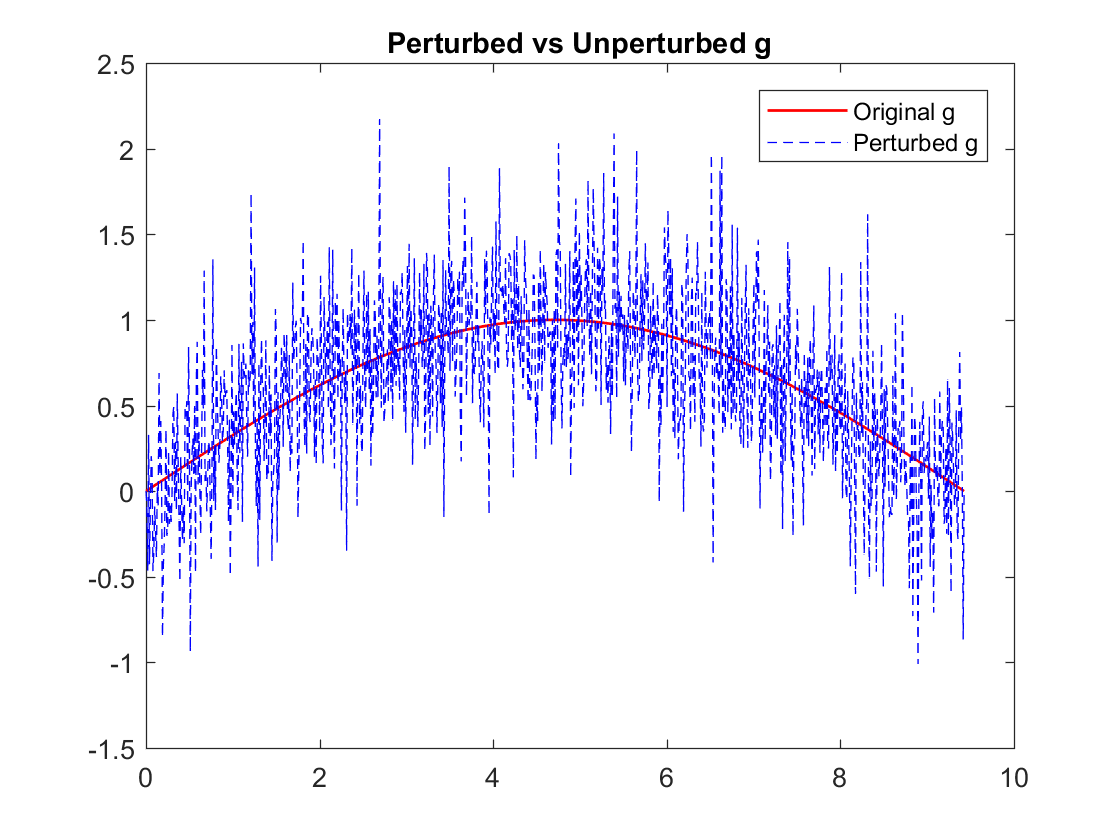}
    \caption{Noisy $\tilde g$}
    \label{Noisy g1}
\end{figure}

\begin{figure}[ht]
    \centering
    \begin{subfigure}{0.4\textwidth}
        \includegraphics[width=\textwidth]{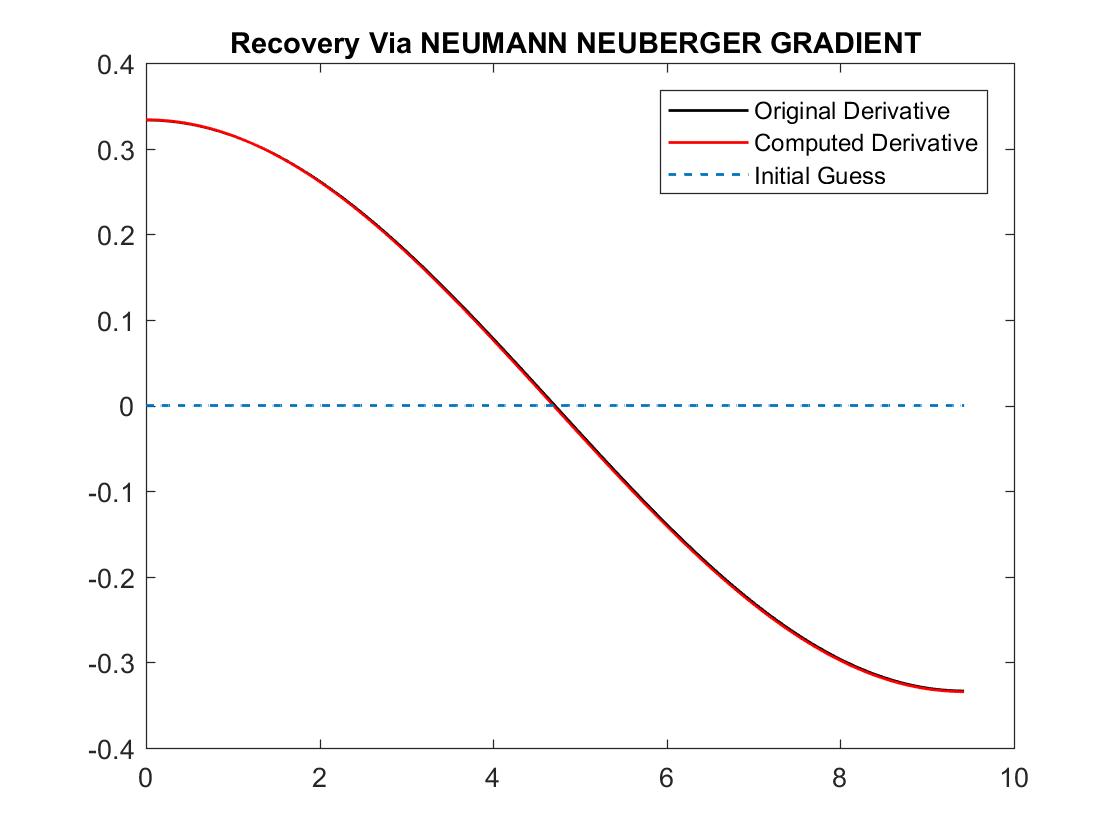}
        \caption{Derivative $\tilde{\varphi}$ vs. $\varphi$}
        \label{example2_1}
    \end{subfigure}
    \begin{subfigure}{0.4\textwidth}
        \includegraphics[width=\textwidth]{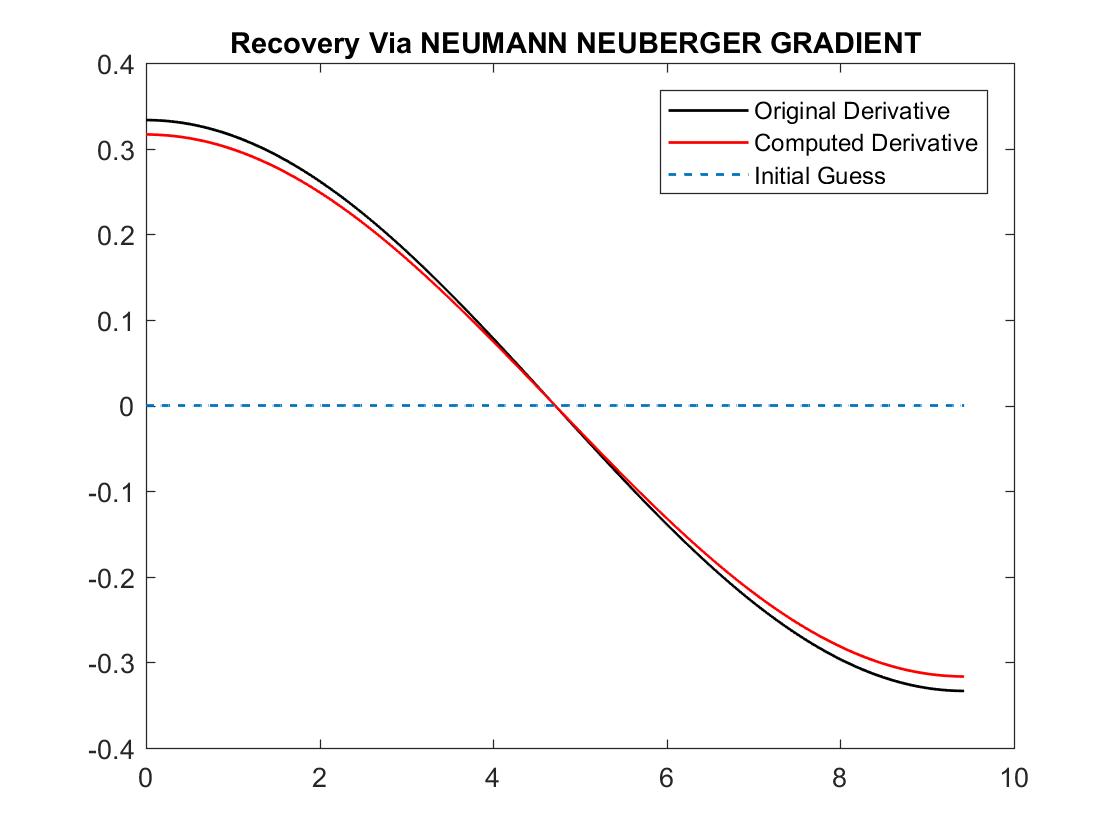}
        \caption{Recovery of $\tilde{\varphi}$}
        \label{nonzeromean}
    \end{subfigure}
    \caption{Inverse recovery of the derivative $\tilde{\varphi}$ and $T\tilde{\varphi}$.}\label{example2}
\end{figure}

In the following two examples we provide the results of numerical differentiation done on a piece-wise differentiable functions and compare it with the results obtained in \cite{disconti1} and \cite{Knowles and Renka}.

\begin{example}\textbf{[Discontinuous source function]}\label{Example4}\\
Here we selected a function randomly from the many functions tested in \cite{disconti1}. The selected function has the following definition:
\[ y_2(t) =  \begin{cases}
1 - t, & t \in [0,0.5],\\
t, & t \in (0.5,1].
\end{cases} \]  
The function $y_2$ is piece-wise differentiable except at the point $t = 0.5$, where it has a sharp edge. The function is then perturbed by a uniform($-\delta$, $\delta$) random variable to get the noisy data $y_{2_\delta}$, where we even increased the error norm in our testing from $\delta = 0.001$ in \cite{disconti1} to $\delta = $0.01 in our case. Figure \ref{our method} shows the recoveries using the method described here and Figure \ref{deriv in paper1} shows the result from \cite{disconti1}. We also compare it with a similar result obtained in \cite{Knowles and Renka}\footnote{where the test function is $g(x) = |x - 0.5|$ on $[0, 1]$ and the data set is 100 uniformly distributed points with $\sigma = 0.01$.} using a total variation regularization method, shown in Figure \ref{renka deriv}.
\begin{figure}[ht]
    \centering
    \begin{subfigure}{0.4\textwidth}
        \includegraphics[width=\textwidth]{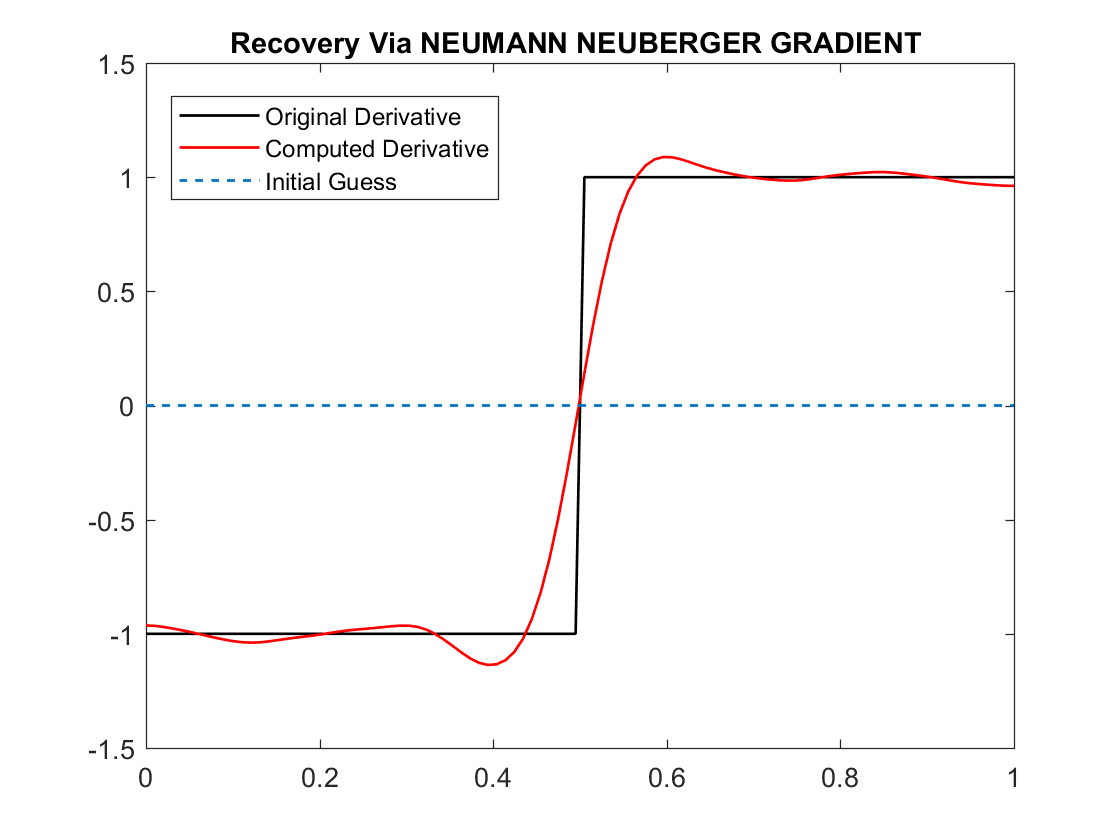}
        \caption{Numerical derivative $\tilde{\varphi}$}
        \label{our method f}
    \end{subfigure}
    ~ 
    \begin{subfigure}{0.4\textwidth}
        \includegraphics[width=\textwidth]{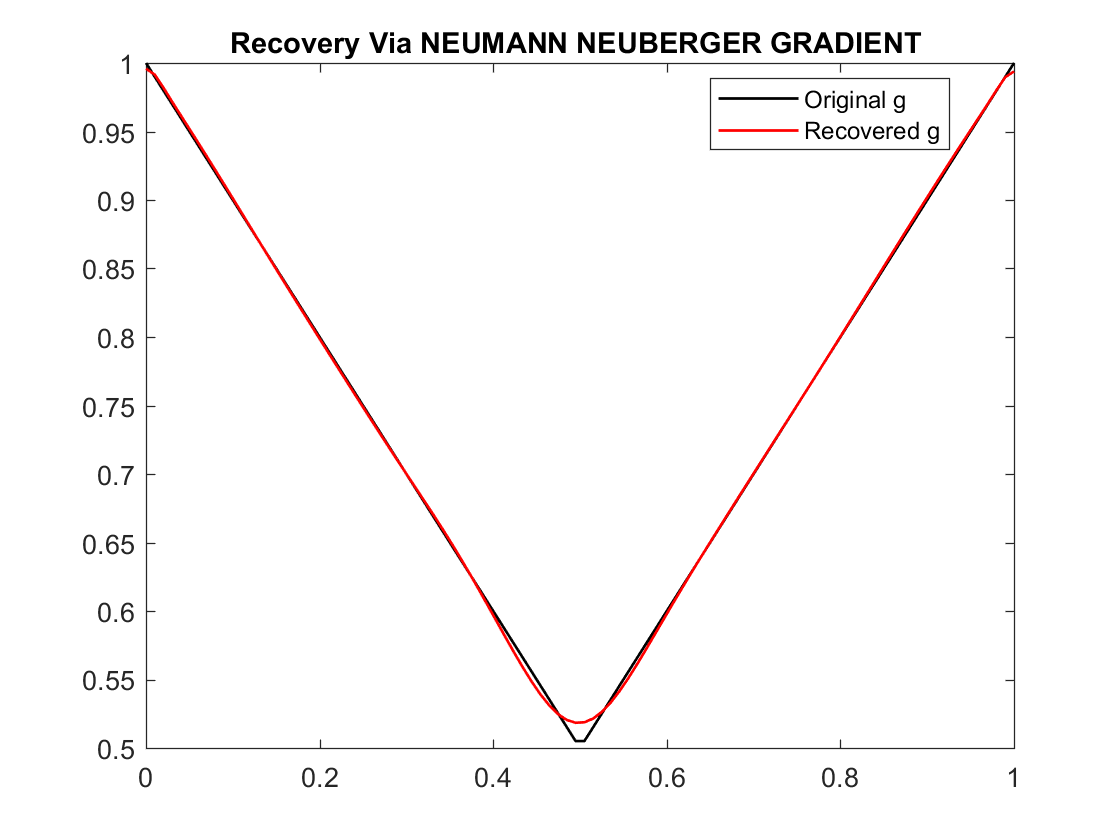}
        \caption{Smooth approximation $T\tilde{\varphi}$}
        \label{our method g}
    \end{subfigure}
    \caption{Recoveries using out method}
    \label{our method}
\end{figure}

\begin{figure}[ht]
    \centering
    \begin{subfigure}{0.4\textwidth}
        \includegraphics[width=\textwidth]{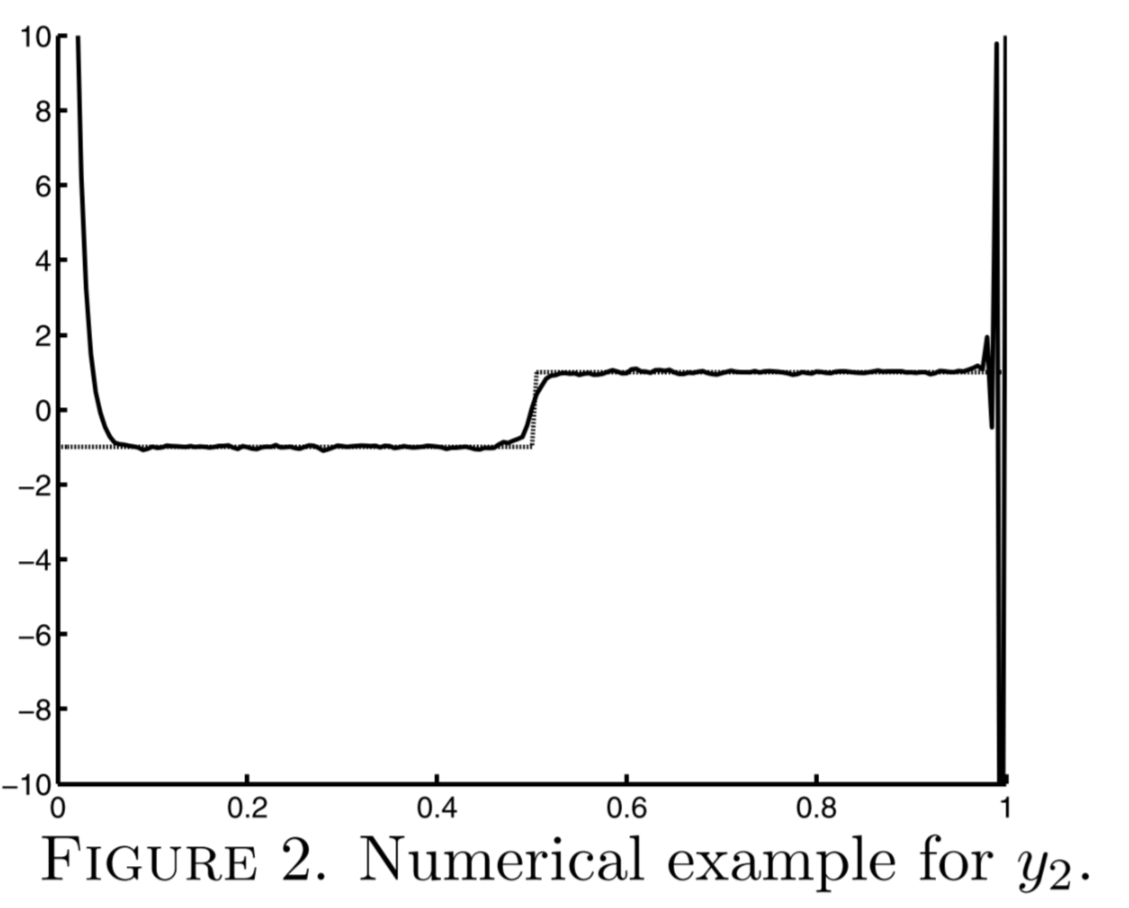}
        \caption{Numerical derivative of $y_{2_\delta}$ from \cite{disconti1}}
        \label{deriv in paper1}
    \end{subfigure}
    ~ 
    \begin{subfigure}{0.4\textwidth}
        \includegraphics[width=\textwidth]{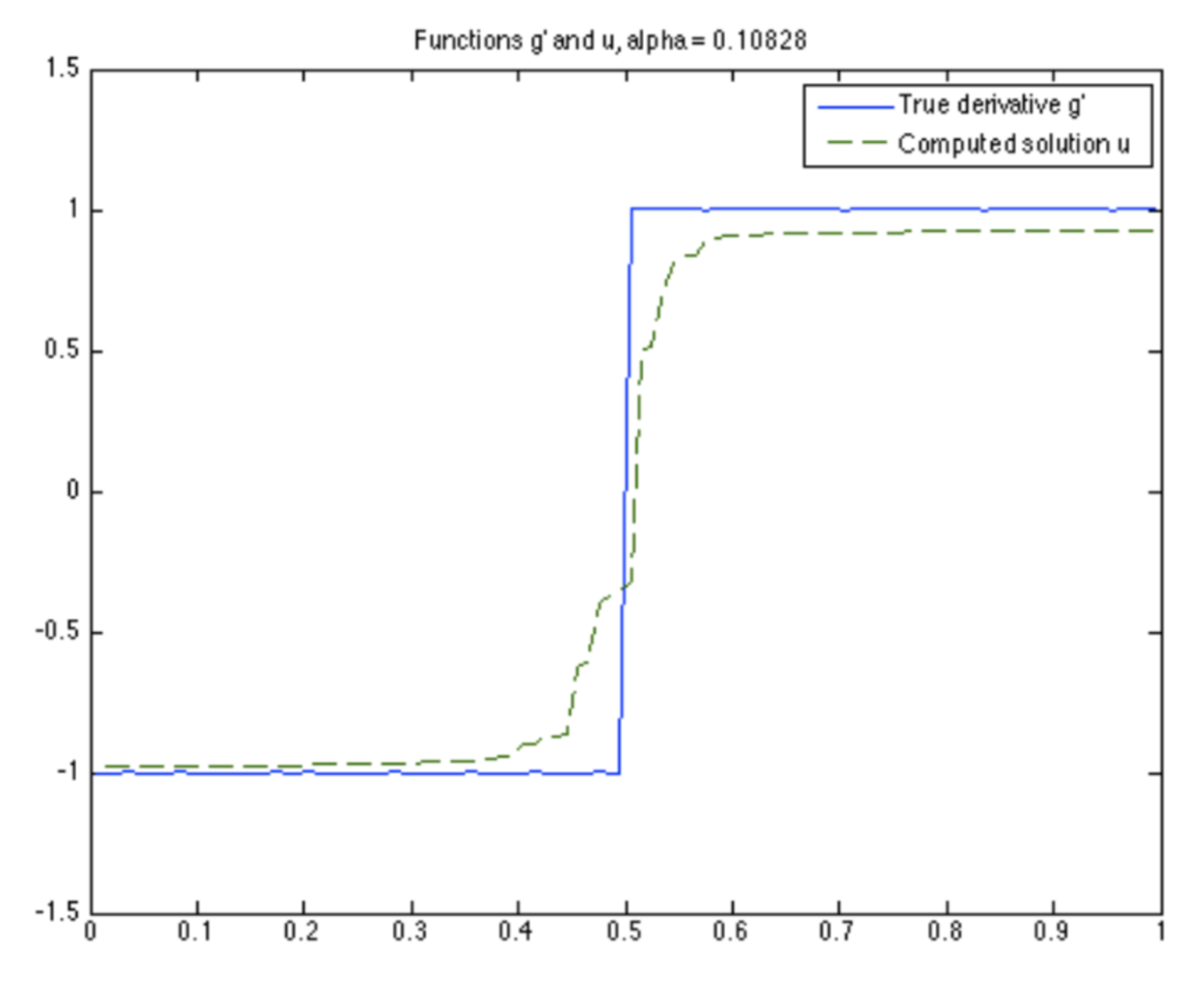}
        \caption{Numerical derivative of $y_{2_\delta}$ from \cite{Knowles and Renka}}
        \label{renka deriv}
    \end{subfigure}
    \caption{(a) Adaptive step size regularization in \cite{disconti1} and (b) total variation regularization from \cite{Knowles and Renka}}
    \label{comparisions}
\end{figure}
\end{example}

\end{section}

\begin{section}{\textbf{Stopping Criterion II}}\label{Stopping Criteria}
As explained in \S \ref{Conditional Well-posedness}, in the presence noise, one has to terminate the descent process at an appropriate iteration to achieve regularization. The discrepancy principle \cite{Morozov, Vainikko, Gfrerer} provides {a} stopping condition provided the error norm ($\delta$) is known. However, in many of the practical situations it is very hard to determine an estimate of the error norm. In such cases heuristic approaches are taken to determine stopping criteria, such as the L-curve method \cite{Hansen, Lawson+Hanson}. 
In this section we present a new heuristic approach to terminate the iterations when the error norm ($\delta$) is unknown. First we notice that the minimizing functional $G$ used here, as defined in \eqref{Gc}, does not contain the noisy $\tilde{g}$ directly, rather an integrated (smoothed) version of it ($\tilde{u}$), as compared to a minimizing functional (such as $G_2$, defined in \eqref{tikhonov functional}) used in any standard regularization method. Hence, in addition to avoiding the noisy data from affecting the recovery, the integration process also helps in constructing a stopping strategy, which is explained below. Figure \ref{gandgdelta vs uandudelta} shows the difference in $g$ and $\tilde{g}$ vs. $u$ and $\tilde{u}$, from Example \ref{Example2}. We can see, from Figure \ref{uvsudelta}, that the integration smooths out the noise present in $\tilde{g}$ to get $\frac{\Lnorm{\tilde{u} - u} }{\Lnorm{u}} \approx 0.78\%$, where as the noise level in $\tilde{g}$ is $\frac{\Lnorm{\tilde{g} - g}}{\Lnorm{g}} \approx 55.44\%$. Consequently, the sequence $\{\tilde{g}_m := T\psi_m\}_{m \geq 1}$, constructed during the descent process, converges (weakly) in $\L$ to $\tilde{g}$, rather than strongly to $\tilde{g}$, that is, for any $\phi \in \L$ the sequence $\{\Lprod{\tilde{g}_m - \tilde{g}}{\phi}\}$ converges to zero. In other words, the integration mitigates the effects of the high oscillations originating from the random variable 
and also of any outliers (as its support is close to zero measure). Also, since the forward operator $T$ (as defined in \eqref{T}) is smooth, the sequence $\tilde{g}_m$ first approximate the exact $g$ (as it is also smooth, $g \in \H$), with the corresponding sequence $\{u_{\psi_m}\}$ approximating $\tilde{u} \approx u$, and then the sequence $\{\tilde{g}_m\}$ attempts to fit the noisy $\tilde{g}$, which leads to a phenomenon known as \textbf{overfitting}. However, when $\tilde{g}_m$ tries to overfit the data (i.e., fit $\tilde{g}$) the sequence values $\Lnorm{\tilde{u} - u_{\psi_m}}$ increases, since the ovefitting occurs in a smooth fashion (as $T$ is a smooth operator) and, as a result, increases the integral values. This effect can be seen in Figure \ref{G3descent_new}, $\Lnorm{u_{\psi_m} - u_\delta}$ descent for Example \ref{Example1_2} (when $\sigma = 0.1$), and in Figure \ref{G1descent-new}, $\Lnorm{T\psi_m - g_\delta}$ descent for Example \ref{Example2}. One can capture the recoveries at these fluctuating\footnote{the fluctuating occurs since the values of $\Lnorm{\tilde{g} - g_m}$ tends to decrease first (when approximating the exact $g$) and then increases (when making a transition from $g$ to $\tilde{g}$) and eventually decreases (when trying to fit the noisy $\tilde{g}$, i.e., overfitting)} points (of either $\Lnorm{T\psi_m - g_\delta}$, $\Lnorm{u_{\psi_m}' - u_\delta'}$ or $\Lnorm{u_{\psi_m} - u_\delta}$) and choose the recovery corresponding to the earliest iteration for which $T\psi_m$ fits through $g_\delta$. Choosing the early fluctuating iteration is especially important when dealing with data with large error level, such as in Example \ref{Example1_2} ($\sigma = 0.1$) and Example \ref{Example2}. For example, from Figure \ref{G1descent-new} if one captures the recovery at iteration 4 then the relative error in the recovery is only 8\% (see Figure \ref{Relerrorf_sg0.5}). However, even if an appropriate early iteration is not selected, still the recovery errors saturate after certain iterations, rather than blowing up. This is significant when dealing with data having small to moderate error level, such as in Example \ref{Example1_2} ($\sigma=0.01$), where one can notice (in Figure \ref{Relerrorf_sg0.01}) that the relative errors of the recoveries attain saturation after recovering the optimal solution, since $\Lnorm{u - u_\delta} \approx 0$ for small $\delta$. Table \ref{example1_2 table2} and \ref{example1_2 table1} shows the relative errors of the recoveries obtained using this heuristic stopping criterion.

\begin{remark}
Note that this phenomena does not occur in Landweber iterations, since one minimizes the functional containing the noisy data $g_\delta$ directly, i.e., $G(\psi) = \Lnorm{T\psi - g_\delta}^2$. Figure \ref{G1descent_newC20} shows the descent of $\Lnorm{T\psi_m - g_\delta}$, when Landweber iterations are implemented for Example \ref{Example1_2} ($\sigma=0.01$), and Figure \ref{Relerrorf_sg001_C20} shows the corresponding descent of the relative errors of the recovered solutions, where can see no fluctuations in Figure \ref{G1descent_newC20} but the semi-convergence in Figure \ref{Relerrorf_sg001_C20}. Therefore, without any prior knowledge of $\delta$ it is hard to stop the descent process and avoids the ill-posedness. Where as, notice the saturation of the relative errors of the recovery (Figures \ref{Relerrorf_sg0.01}) when we implement our method to the same problem.
\end{remark}{}

\begin{figure}[ht]
    \centering
    \begin{subfigure}{0.4\textwidth}
        \includegraphics[width=\textwidth]{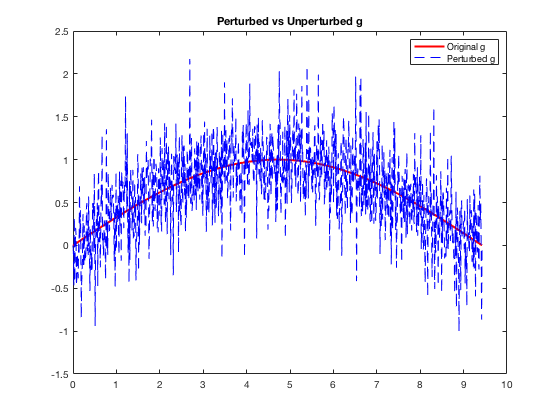}
        \caption{Noisy $\tilde{g}$ vs exact g}
        \label{gvsgdelta}
    \end{subfigure}
    \begin{subfigure}{0.4\textwidth}
        \includegraphics[width=\textwidth]{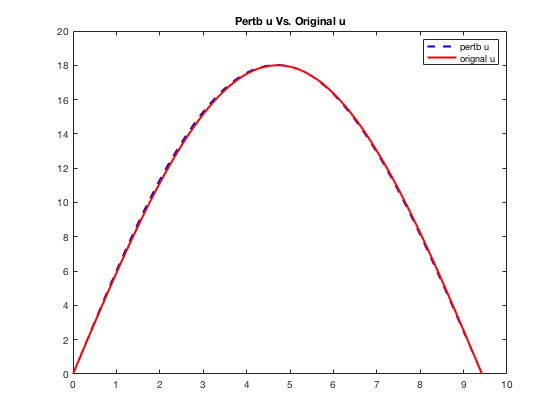}
        \caption{Noisy $\tilde{u}$ vs exact u}
        \label{uvsudelta}
    \end{subfigure}
    \caption{Integration smooths out the noise present in the data.}
    \label{gandgdelta vs uandudelta}
\end{figure}

\begin{figure}[ht]
    \centering
    \begin{subfigure}{0.4\textwidth}
    \includegraphics[width = \textwidth]{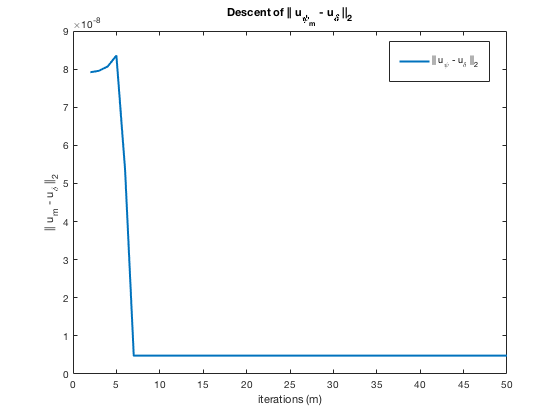}
    \caption{$\Lnorm{u_{\psi_m} - u_\delta}$, Exp \ref{Example1_2} ($\sigma=0.1$)}
    \label{G3descent_new}
    \end{subfigure}
    \begin{subfigure}{0.4\textwidth}
    \includegraphics[width = \textwidth]{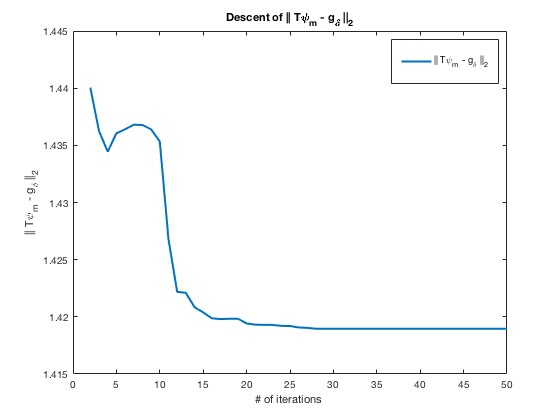}
    \caption{$\Lnorm{\tilde{g} - g_m}$, Example \ref{Example2}}
    \label{G1descent-new}
    \end{subfigure}
    \caption{Fluctuations during the descent process.}
\end{figure}

\begin{figure}[ht]
    \centering
    \begin{subfigure}{0.4\textwidth}
    \includegraphics[width = \textwidth]{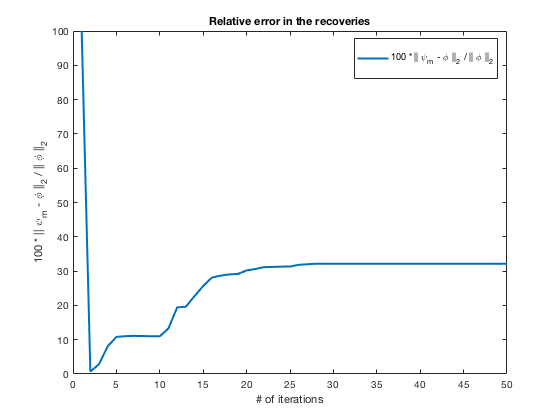}
    \caption{Example \ref{Example2}}
    \label{Relerrorf_sg0.5}
    \end{subfigure}
    \begin{subfigure}{0.4\textwidth}
    \includegraphics[width = \textwidth]{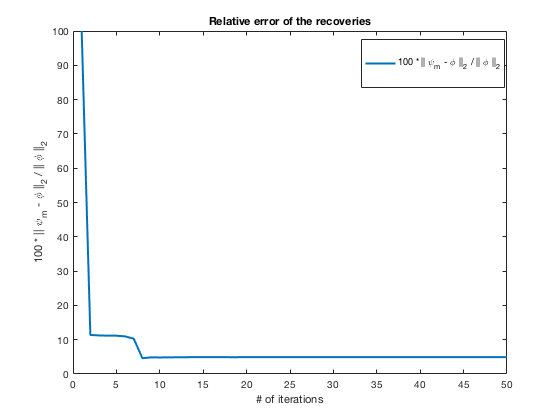}
    \caption{Example \ref{Example1_2} ($\sigma = 0.01$) }
    \label{Relerrorf_sg0.01}
    \end{subfigure}
    \caption{Relative errors in the recoveries during the descent process.}
\end{figure}

\begin{figure}[ht]
    \centering
    \begin{subfigure}{0.4\textwidth}
    \includegraphics[width = \textwidth]{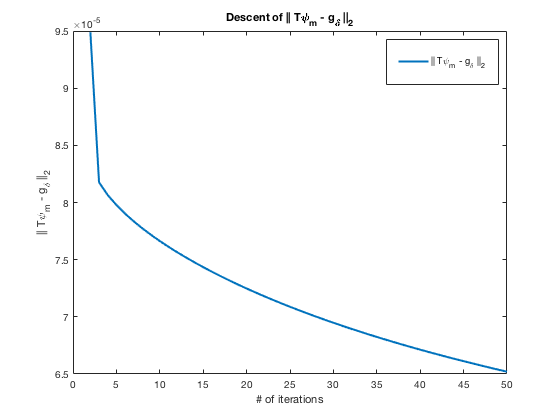}
    \caption{$\Lnorm{T\psi_m - g_\delta}$-descent}
    \label{G1descent_newC20}
    \end{subfigure}
    \begin{subfigure}{0.4\textwidth}
    \includegraphics[width = \textwidth]{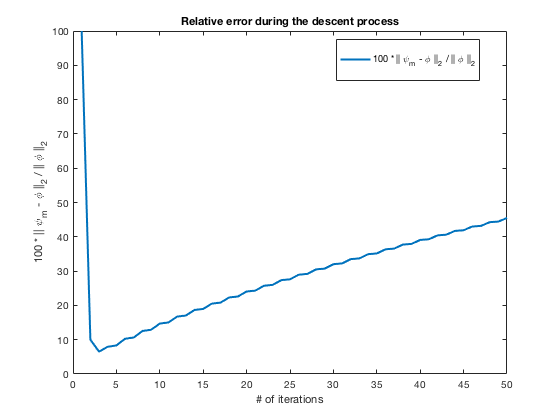}
    \caption{relative errors descent.}
    \label{Relerrorf_sg001_C20}
    \end{subfigure}
    \caption{Landweber iterations on Example \ref{Example1_2} ($\sigma=0.01$)}
\end{figure}

\begin{table}
    \centering
\begin{tabular}{ |p{3.6cm}||p{2.5cm}|p{2.5cm}|p{2cm}|}
 \hline
 \multicolumn{4}{|c|}{Relative errors and the descent rates using different gradients } \\
 \hline
   m=100 (h=0.01), extreme error $\sigma = 0.1$ & Sobolev gradient ($\hgrad{}G$) & $\mathcal{L}^2$-$\mathcal{H}^1$ conjugate gradient & $\mathcal{H}^1$-$\mathcal{H}^1$ conjugate gradient\\
 \hline
  recovery errors (using SC-I \S \ref{Conditional Well-posedness}, known $\delta$) & 0.0839 & 0.4364 & 0.1522 \\
 \hline
  recovery errors (using SC-II \S \ref{Stopping Criteria}, unknown $\delta$) & 0.1299 & 0.1299 & 0.1299 \\
  \hline
  \# iterations (using SC-I \S \ref{Conditional Well-posedness}, known $\delta$) & 39 & 4 & 6 \\
  \hline
  \# iterations (using SC-II \S \ref{Stopping Criteria}, unknown $\delta$) & 2 & 2 & 2\\  
\hline
\end{tabular}
     \caption{}
    \label{example1_2 table2}
\end{table}

\begin{table}
    \centering
\begin{tabular}{ |p{3.6cm}||p{2.5cm}|p{2.5cm}|p{2cm}|}
 \hline
 \multicolumn{4}{|c|}{Relative errors and the descent rates using different gradients } \\
 \hline
   m=100 (h = 0.01), moderate error $\sigma = 0.01$ & Sobolev gradient ($\hgrad{}G$) & $\mathcal{L}^2$-$\mathcal{H}^1$ conjugate gradient & $\mathcal{H}^1$-$\mathcal{H}^1$ conjugate gradient\\
 \hline
  recovery errors (using SC-I \S \ref{Conditional Well-posedness}, known $\delta$) & 0.0607 & 0.0589 & 0.0613 \\
 \hline
  recovery errors (using SC-II \S \ref{Stopping Criteria}, unknown $\delta$) & 0.1129 & 0.1129 & 0.1129 \\
  \hline
  \# iterations (using SC-I \S \ref{Conditional Well-posedness}, known $\delta$) & 84 & 5 & 18\\
  \hline
  \# iterations (using SC-II \S \ref{Stopping Criteria}, unknown $\delta$) & 3 & 3 & 3\\  
\hline
\end{tabular}
     \caption{}
    \label{example1_2 table1}
\end{table}

\end{section} 

\begin{section}{\textbf Conclusion and Future Research}
This algorithm for numerical differentiation is very effective, even in the presence of extreme noise, as can be seen from the examples presented in Section \ref{R}. Furthermore, it serves as a universal method to deal with all scenarios such as when the data set is dense or sparse and when the function $g$ is smooth or not smooth. The key feature in this technique is that we are able to upgrade the working space of the problem from $\H$ to $\HHHO$, which is a much smoother space. Additionally, this method also enjoys many advantages of not encountering the involvement of an external regularization parameter, for example one does not have to determine the optimum parameter choice to balance between the fitting and smoothing of the inverse recovery. Even the heuristic approach for the stopping criteria also provides us with a much better recovery, and hence it's very applicable in the absence of the error norm.  

In a follow up paper, we improve this method to calculate derivatives of functions in higher dimensions and for higher order derivatives. Moreover, we can extend this method to encompass any linear inverse problems and thereby generalize the theory; which will be presented in the coming paper where we will apply this method to recover solution of Fredholm Integral Equations, like deconvolution or general Volterra equation. 
\end{section}

\section*{Acknowledgment}
I am very grateful to Prof. Ian Knowles for his support, encouragement and stimulating discussions throughout the preparation of this paper.


\bibliographystyle{amsplain}

\end{document}